\crefname{hypothesis}{Hypothesis}{Hypotheses}
\title{Novel energy-preserving splitting integration for Hamiltonian Monte Carlo method
}
\author{Fasma Diele,  Carmela Marangi, \thanks{Istituto per Applicazioni del Calcolo 'M.Picone',National Research Council (CNR), via Amendola 122/D, 70126 Bari, Italy
  (\email{fasma.diele@cnr.it}, \email{carmela.marangi@cnr.it}). 
  }
 \and Cristiano Tamborrino \thanks{ Institute of Nanotechnology, National Research Council(CNR), Via Monteroni, 73100 Lecce, Italy  (\email{cristiano.tamborrino@nanotec.cnr.it})}
 \and Cristina~Tarantino
 \thanks{Institute of Atmospheric Pollution Research (IIA), National Research Council (CNR), c/o Interateneo Physics Department, Via Amendola 173, 70126 Bari, Italy   (\email{cristina.tarantino@iia.cnr.it }).} 
}
\begin{document}

\maketitle
\begin{abstract}
Splitting schemes are numerical integrators for Hamiltonian problems that may advantageously replace the { St\"ormer-Verlet} method within Hamiltonian Monte Carlo (HMC)  methodology. 
  However, HMC performance is very sensitive to the step size parameter; in this paper we propose a new method in the one-parameter family of second-order of
splitting procedures that uses a well-fitting parameter that nullifies the expectation of the energy error for univariate and multivariate Gaussian distributions, taken as a problem-guide for more realistic situations; we also provide a new algorithm that through an adaptive choice of the $b$ parameter and the step-size ensures high sampling performance of HMC. For similar methods introduced in recent literature, by using the proposed step size selection,  the  splitting integration within HMC method  never rejects a sample  when applied to univariate and multivariate Gaussian distributions. For more general non Gaussian  target distributions the proposed approach exceeds the principal especially when the adaptive choice is used.
   The effectiveness of the proposed  is firstly tested on some benchmarks examples taken from literature. Then, we conduct experiments by considering as target distribution, the Log-Gaussian Cox process and Bayesian Logistic Regression. 

\end{abstract}

\begin{keywords}
Hamiltonian Monte Carlo, energy-preserving splitting methods, Gaussian distributions
\end{keywords}

\begin{AMS}
  65L05, 65C05, 37J05
\end{AMS}

\section{Introduction}
\label{S:Intro}
In the seminal paper  \cite{duane1987hybrid},  the  two main approaches to simulate the distribution of states for a molecular system, i.e.  the Markov Chain Monte Carlo (MCMC) originated with the classical paper in \cite{metropolis1953equation} and the deterministic one, via  Hamiltonian formalism \cite{alder1959studies}, are merged in a unique method, originally
named {\it Hybrid Monte Carlo}, hereinafter referred to as {\it Hamiltonian Monte Carlo} (HMC),  taking up the suggestion made  by  R.M. Neal in \cite{neal2011mcmc}. 

\noindent  At each step of the Markov chain, HMC requires the numerical integration of a
Hamiltonian system of differential equations; typically, the second-order splitting method known as  {\it St\"ormer-Verlet} or {\it Leapfrog}   algorithm
(see, e.g.,  \cite{leimkuhler2004simulating}), is used to carry out such an integration. Whether the above algorithm may be replaced by more efficient alternatives is the question faced by many researchers (see, for example, \cite{leimkuhler2013robust}, \cite{predescu2012computationally}, \cite{joo2000instability},  \cite{takaishi2006testing}, \cite{blanes2021symmetrically} and  references therein).  In designing a new algorithm the goal is to enlarge the usable time step in order to explore larger portion of the phase space; however, working in the high-time step regime for long-time simulations induces perturbations in  computed probability, dependent on the step size. This bias  leads to a distortion in calculated energy averages which produces a high percent of rejections in HCM algorithm.

\noindent An element unifying  the recent efforts to propose alternatives to the St\"ormer-Verlet algorithm (see, for example, \cite{blanes2014numerical}, \cite{predescu2012computationally}, \cite{leimkuhler2013robust}), is the analysis of their effectiveness when applied to Gaussian distributions. 
Needless to say, as already underlined in \cite{blanes2014numerical},   it makes no practical sense to use a Markov chain algorithm to sample from a Gaussian distribution, as it makes no sense to  numerically integrate the harmonic oscillator equations. However, it is a common practice to evaluate the performance of algorithms  on simple problems as they represent benchmarks for more complex situations.

\noindent In this perspective, we propose a specific selection of the step size parameter $h_b=h(b)$ as function of the parameter $b$ defining  the one-parameter family of second-order of splitting procedures proposed in  \cite{blanes2014numerical}. When adopting  the proposed criterion  for sampling from Gaussian distributions,  all the methods in the splitting  family are featured by a zero expectation value for the 
 random variable representing  the  energy error. The novel approach stems from some energy-preserving splitting methods for Hamiltonian dynamics proposed in  \cite{pace2015splitting},  here adapted in the context of HMC. Specifically, instead of fixing the step size $h$ and choosing the  parameter $b$  which minimizes the expectation of the energy error as in  \cite{blanes2014numerical}, we fix the parameter $b$ and we identify the step size $h_b$ which exactly  nullifies the energy error and, consequently, its expected value. 
 
 \noindent As the distortion in calculated energy averages produces a high percent of rejections in HCM algorithm,  preserving as much as possible the energy is of  outmost importance within the HMC procedure in terms of saving of computational time, particularly in the case of high-dimensional problems \cite{calvo2019hmc}.
For the above reasons, in this paper we explore whether the adopted step size selection  which nullifies the energy error in case of the both univariate and multivariate test problems, can also reduce the number of rejection steps even when used within HMC processes  for sampling from generic distributions.
Moreover, we propose a novel implementation of the HMC algorithm based on  an adaptive choice of the parameter $b$, defining  the one-parameter family of second-order of splitting procedures, based on its  reduction whenever a sample is rejected. 
In particular, we test our technique on the Log-Gaussian Cox model, a point  process for presence-only species distribution representing a   statistical tool supporting the modelling of the spread of invasive species \cite{renner2015, baker2018optimal, baker2019optimal, lacitignola2015dynamical}. 
 
\noindent The presentation of the step size selection for the family of  splitting integrators here considered is  preceded by the analysis of the linear map generated by the application of a general volume-preserving   and momentum  flip-reversible integrator within the HMC method. The obtained results  generalize the ones given in \cite{blanes2014numerical} in that the standard deviations may assume arbitrary values and, consequently,  more general expressions for both the energy error and of its expected value are provided; on the other hand,  it departs from  \cite{blanes2014numerical}, this representing an adding element of novelty, as the quantity responsible for the generation of the error in approximating the Hamiltonian, is here exactly identified. 
  In doing so, when analyzing the special family of second-order splitting integrators, it turns out to be a trivial task to identify  the  parameter $h$  that makes the resulting methods   exactly  energy preserving. 
 



\bigskip 
\noindent The paper is organized as follows: in Section \ref{sec:2}  the general framework on sampling from a target distribution throughout the HMC algorithm is recalled and some theoretical and practical implementation details are briefly provided. In Section \ref{sec:3} we present the analysis of the energy- preserving linear maps generated by the application of a general  volume-preserving   and momentum  flip-reversible integrator within HMC method on  both univariate and multivariate Gaussian distributions, taken as test problems. In Section \ref{sec:4}, the  maps built on the splitting of the Hamiltonian vector field are introduced; then the classical  St\"ormer-Verlet method (Section \ref{sec:4.1}) and the one-parameter family of second-order splitting integrators (Section \ref{sec:4.2}) are presented. For the last class, the main result is described in Theorem \ref{Th2} where a suitable selection of the step size provides a stable, energy preserving approximation of the univariate Gaussian test problem.   This result is then generalized  for multivariate Gaussian distributions in Section \ref{sec:4.3} (Theorem \ref{th:multi}). For sampling from generic distributions, in Section \ref{sec:5}  we propose, within  the HMC algorithm, the novel implementation of a one-parameter family of  splitting procedures  which advances with the same step size $h=h_b$ which nullifies the energy error in the case of Gaussian distributions, adopting an adaptive reduction of the parameter $b$ as presented in Algorithm \ref{alg:buildtreenovel}. 
Numerical experiments are given in Section \ref{sec:6}. As a verification of the theoretical results, we firstly apply the proposed energy-preserving procedure for both bivariate and multivariate Gaussian distributions.
Then, we show the performance  for   more general distributions within the class of perturbed Gaussian models, i.e. Log-Gaussian Cox processes, representing the distribution of the {\it Ailanthus altissima} tree, an invasive alien species spreading in a protected area in the South of Italy \cite{baker2018optimal}, \cite{baker2019optimal}, \cite{marangi2020mathematical} and for the Bayesian Logistic Regression model.  Conclusive remarks and  possible future developments are drawn in Section \ref{sec:conclusions}.

\section{The Hamiltonian Monte Carlo algorithm}\label{sec:2}
The  description of the HMC algorithm given below  follows the steps described in \cite{neal2011mcmc}.  Given a data set  $X$, suppose that we wish to sample, using Hamiltonian dynamics,  the variable  $\mathbf{q}\, \in \mathbf{R}^d$ from a probability distribution of interest  $\mathcal{P}(\mathbf{q})$ with prior density $\pi(\mathbf{q})$ and likelihood function $L(\mathbf{q}|X)$ i.e. $\mathcal{P}(\mathbf{q})\, = \,\displaystyle  \pi(\mathbf{q}) \, L(\mathbf{q}|X) $. The first step is to associate, via the canonical distribution, a potential energy function defined as follows 
$$
U(\mathbf{q})\, =\, -\log\left[\mathcal{P}(\mathbf{q})\right]\, -\, \log(Z), \quad Z>0,
$$
so that 
$
\mathcal{P}(\mathbf{q})\, \propto \, \, \,  \exp\,( -U(\mathbf{q})).
$
Then, we introduce  auxiliary  momentum variables $\mathbf{p}\in \mathbf{R}^d$, independent of $\mathbf{q}$, specifying the distribution via the kinetic energy function $K(\mathbf{p})$. The current practice with HMC is to use a quadratic kinetic energy
$
K(\mathbf{p})\, =\, \frac{1}{2} \, \mathbf{p}^T \, D_{\beta}^{-1}\mathbf{p}
$
where, without loose of generality we suppose that the components of $\mathbf{p}$ are specified to be independent so that 
 $D_{\beta}$ is a diagonal matrix 
 with entries $\beta^2_i$, each representing the variance of the $ith$  component $p_i$ of the vector $\mathbf{p}$. 
The canonical distribution 
$
\mathcal{P}(\mathbf{p}) \, =\, \exp(-K(\mathbf{p})) \,
$
results to be the  zero-mean multivariate Gaussian distribution. We denote with  $H(\mathbf{q},\, \mathbf{p})\,=\,  U(\mathbf{q})\, +\, K(\mathbf{p})$ the energy function for the joint state of position $\mathbf{q}$ and momentum $\mathbf{p}$, which defines a joint canonical distribution satisfying
$$
\mathcal{P}(\mathbf{q}, \mathbf{p })\, =\, \frac{1}{Z} \exp(-H(\mathbf{q}, \, \mathbf{p}))\, =\, \frac{1}{Z} \exp(\,-U(\mathbf{q}))\,   \exp(-K( \mathbf{p}))\, = \, \mathcal{P}(\mathbf{q})\,\mathcal{P}(\mathbf{p}).
$$
We see that the joint (canonical) distribution for $\mathbf{q}$ and $\mathbf{p}$  factorizes. This means that the two variables are independent, and the canonical distribution $\mathcal{P}(\mathbf{q})$ is independent of $\mathcal{P}(\mathbf{p})$. Therefore, we can use the Hamiltonian dynamics to sample from the joint canonical distribution $\mathcal{P}(\mathbf{q}, \mathbf{p })$  and simply ignore the momentum contributions. The introduction of  the auxiliary variable $\mathbf{p}$  allows the Hamiltonian dynamics to perform \cite{neal2011mcmc}.

\noindent Starting from the generation of  an initial position state $\mathbf{q}^{(i)}  \, \propto \pi(\mathbf{q})$, for $i=0,\dots L$ each iteration of the HMC algorithm has two steps. The first step  chooses the initial momentum by randomly drawing  values $\mathbf{p}^{(i)}$ from its zero-mean multivariate Gaussian distribution $\mathcal{N}(0,D_{\beta})$.  The second step, 
starting at $t=0$ with initial states $\mathbf{Q}(0) \, =\,  \mathbf{q}^{(i)}$ and $\mathbf{P}(0) \, =\,  \mathbf{q}^{(i)}$  solves the  Hamiltonian dynamics 
\begin{equation}\label{eq:HS}
  \dfrac{d\mathbf{Q}}{dt} =\,\nabla_\mathbf{P} \, K(\mathbf{P})\, = \,  D_{\beta}^{-1}\, \mathbf{P}, \qquad \dfrac{d\mathbf{P}}{dt}\, =\, - \nabla_\mathbf{Q} \, U(\mathbf{Q}), \qquad t\in (0, \, T^*].   
\end{equation}
with Hamiltonian function
\begin{equation}\label{eq:Hami_general}
    H(\mathbf{Q},\mathbf{P}): = \frac{1}{2} \, \mathbf{P}^T \, D_{\beta}^{-1} \mathbf{P} \, +\,  U(\mathbf{Q})
\end{equation}
Then,  the state of the position  at the end of the simulation $\mathbf{Q}(T^*)$  is used as the next state of the Markov chain by setting   $\mathbf{q}^{(i+1)}\, = \mathbf{Q}(T^*)$. Combining these steps, the sampling of the random momentum, followed by the Hamiltonian dynamics, defines the {theoretical} HMC \cref{alg:buildtree} for drawing $L$ samples from a target distribution.

\begin{algorithm}
\caption{HMC algorithm (theoretical)}
\label{alg:buildtree}
\begin{algorithmic}
\STATE{Draw $\mathbf{q}^{(1)} \sim \pi(\mathbf{q})$,  $\mathbf{q}^{(1)}\in \mathbb{R}^d$,  $L\geq 1$, set  $i=0$}
\WHILE{$i < L$}
\STATE{i=i+1}
\STATE{Draw  $\mathbf{p}^{(i)} \sim \mathcal{N}(0,D_{\beta})$, }
\STATE{Set $(\mathbf{Q}(0), \,\mathbf{P}(0) ) =  (\mathbf{q}^{(i)},\, \mathbf{p}^{(i)})$, set j\,=\,0}
\WHILE{$j < 1$}
\STATE {Randomly  choose $T^*>0$}
\STATE{Solve $\dfrac{d\mathbf{Q}}{dt} =D_{\beta}^{-1}\, \mathbf{P}, \quad \dfrac{d\mathbf{P}}{dt}\, = - \nabla_\mathbf{Q} \, U(\mathbf{Q}), \quad t\in (0, \, T^*]$}
\STATE{if $\left(\mathbf{Q}(T^*),\, \mathbf{P}(T^*)\right)\, \neq \, (\mathbf{Q}(0), \,\mathbf{P}(0) )$, j\,=\,1 }
\ENDWHILE
\STATE{Update:  $\mathbf{q}^{(i+1)}\, = \mathbf{Q}(T^*)$ }
\ENDWHILE
\RETURN  Markov chain $\mathbf{q}^{(1)}, \, \mathbf{q}^{(2)}, \dots, \, \mathbf{q}^{(L)}$
\end{algorithmic}
\end{algorithm}
\noindent  As anticipated in the Introduction, an important observation is that, in the framework of Hamiltonian dynamics for the Markov Chain Monte Carlo algorithm, the fictitious final time  $T^*>0$ behaves as a parameter to be selected. A criterion adopted to select this value should preserve the {\it ergodicity} of the  HMC algorithm.
In a HCM iteration, any value can be sampled for the momentum variables, which  can typically then affect the position variables in arbitrary ways; however, ergodicity can fail if the chosen $T^*$ produces  an exact periodicity for some function of the state. For example, with ${q}^{(i)} \sim \mathcal{N}(0,1)$ and ${p}^{(i)} \sim \mathcal{N}(0,1)$, the Hamiltonian dynamics for $Q$ and $P$ define the equations of  harmonic oscillator 
\begin{equation}\label{arm_osctest}
 \dfrac{dQ}{dt}=P, \qquad
 \dfrac{dP}{dt}\,=-Q,
\end{equation}
whose solutions are periodic with period $2\, \pi$. Choosing $T^*\, =2\, \pi$ the trajectory returns to the same position coordinate and the HCM will be not ergodic.  
This potential problem of non-ergodicity can be solved by randomly choosing $T^*$ and doing this routinely, as in \cref{alg:buildtree}.


\subsection{Practical  implementation of the HMC algorithm}
\noindent Starting from $\mathbf{Q}_0=\mathbf{Q}(0)$, $\mathbf{P}_0=\mathbf{P}(0)$,  a practical implementation of  \cref{alg:buildtree} needs to numerically integrate the Hamiltonian system (\ref{eq:HS}) by means of a map $(\mathbf{Q}_{n+1},\, \mathbf{P}_{n+1})\, =\, \Psi_h(\mathbf{Q}_{n},\, \mathbf{P}_{n}) $, for $n=0,\dots N$,  where $N$ and  the step size $h$ satisfy $N\,h\, = T^*$. 
In order to safely  replace the theoretical solution with an approximated one, the chosen map $\Psi_h$ should result a transformation in phase space  which inherits, from the theoretical flow, two main characteristic: to be volume-preserving i.e. 
$
\det(\Psi_h'(\mathbf{Q}_n,\mathbf{P}_n))\,=\, 1, 
$ where
$\Psi'$ denotes the Jacobian matrix of $\Psi$,
and {\it momentum flip} - reversible \cite{hairer2003geometric}:
$$
\Psi_h(\mathbf{Q}_{n},\, \mathbf{P}_{n}) = (\mathbf{Q}_{n+1},\, \mathbf{P}_{n+1}) \iff 
\Psi_h(\mathbf{Q}_{n+1},\, -\mathbf{P}_{n+1})\, = (\mathbf{Q}_{n}, -\mathbf{P}_{n}) 
$$
for $n=0,\dots N$.
This guarantees the construction of a  Markov chain  which is reversible with respect to the target probability distribution $\pi(\mathbf{q})$ \cite{blanes2014numerical}.

\noindent Position and momentum variables at the end of the simulation are used as   proposed  variables $\mathbf{q}^*\, =\, \mathbf{Q}(T^*)$  and  $\mathbf{p}^*\,=\, \mathbf{P}(T^*)$ and are  accepted using an update rule analogous to the Metropolis acceptance criterion. Specifically, if the probability of the joint distribution at $T^*$ i.e. $exp(-H(\mathbf{q}^*, \, \mathbf{p}^*))$ is greater then the initial $exp(-H(\mathbf{q}^{(i)}, \, \mathbf{p}^{(i)}))$, then the proposed state  is accepted  and  $\mathbf{q}^{(i+1)}\, = \mathbf{q}^*$, otherwise it is rejected and  the next state of the Markov chain is set as  $\mathbf{q}^{(i+1)}\, = \mathbf{q}^{(i)}$. Combining these steps, sampling random momentum, followed by Hamiltonian dynamics and Metropolis acceptance criterion, defines the HMC  \cref{alg:buildtreepractical} for drawing $L$ samples from a target distribution.

\begin{algorithm}
\caption{HMC algorithm (practical)}
\label{alg:buildtreepractical}
\begin{algorithmic}
\STATE{Draw $\mathbf{q}^{(1)} \sim \pi(\mathbf{q})$,  $\mathbf{q}^{(1)}\in \mathbb{R}^d$, $L\geq 1$, set  $i=0$}
\WHILE{$i < L$}
\STATE{i=i+1}
\STATE{Draw  $\mathbf{p}^{(i)} \sim \mathcal{N}(0,D_{\beta})$}
\STATE{Set $(\mathbf{Q}_0, \,\mathbf{P}_0 ) =  (\mathbf{q}^{(i)},\, \mathbf{p}^{(i)})$}, set $j\,=\,0$
\WHILE{$j < 1$}
\STATE {Randomly  choose $T^*>0$}.
\STATE{Set $N\geq 1$ or  $h>0$ such that $T^*\, =\, N\, h$} 
\STATE{Evaluate $(\mathbf{Q}_{n+1}\, \mathbf{P}_{n+1})\, =\, \Psi_h(\mathbf{Q}_{n},\, \mathbf{P}_{n}) $, \,for $n=0,\dots N-1$}
\STATE{if $\left(\mathbf{Q}_N,\, \mathbf{P}_N\right)\, \neq \, (\mathbf{Q}_0, \,\mathbf{P}_0 )$, j\,=\,1 }
\ENDWHILE
\STATE{Set $\left(\mathbf{q}^*,\, \mathbf{p}^*\right)\,=\, \left(\mathbf{Q}_N,\, \mathbf{P}_N\right)$ }
\STATE{Calculate 
$\alpha = \text{min}\left(1,\exp\left(H(\mathbf{q}^{(i)},\mathbf{p}^{(i)}) -H(\mathbf{q}^*,\mathbf{p}^*)\right)\right)$}
\STATE{Draw  $u \sim \mathcal{U}(0,1)$}
\STATE{Update: if $\alpha >\,u$ then $\mathbf{q}^{(i+1)}\, = \mathbf{q}^*$; otherwise $\mathbf{q}^{(i+1)}\, = \mathbf{q}^{(i)}$}
\ENDWHILE
\RETURN  Markov chain $\mathbf{q}^{(1)}, \, \mathbf{q}^{(2)}, \dots, \, \mathbf{q}^{(L)}$
\end{algorithmic}
\end{algorithm}

\noindent Notice that a map which approximates the solution of the Hamiltonian flow (\ref{eq:HS}), in such a way that $H(\mathbf{Q}_N,\mathbf{P}_N)\,-\, H(\mathbf{Q}_0,\mathbf{P}_0) \, \leq\, 0$   produces all accepted proposals.  However, in \cite{blanes2014numerical} it has been shown that, roughly speaking,  for a
momentum-flip reversible volume-preserving transformation the phase space is always  divided into two regions
of the same volume, one corresponding to negative energy errors  
and the other, corresponding to flip the momentum 
with positive energy errors, so that, unless the map is energy-preserving 
it  may potentially lead to rejections.

\section{Energy-preserving linear maps for Gaussian  distributions}\label{sec:3}
\noindent The chosen  step size $h>0$ is  crucial  in the implementation of \cref{alg:buildtreepractical}.  Too small a step size will  waste computation time as it will require a large $N$ in order to reach the final step  $T^*\, =N\, h$.  Too large a step size will increase bounded oscillations in the value of the Hamiltonian, which would be constant if the trajectory were simulated by an energy-preserving map. Moreover, when values for $h$ are chosen above the critical stability threshold, which is characteristic of each approximating map $\Psi_h$, then the  Hamiltonian grows without bound, resulting to  an extremely low acceptance rate for states proposed by simulated trajectories. 
Hence the selection of the step size $h$ should obey to stability constraints.  The issue of stability is traditionally faced by means of a test problem; for HCM flows,  it is represented by the problem defined by a Gaussian  zero-mean distribution for both $q$ and $p$. Firstly, we account for the one-dimensional  problem and then we extend the analysis to the multi-dimensional case.  
\subsection{Univariate case}\label{sec:3.1}
We generalize the approach in both  \cite{blanes2014numerical} and \cite{neal2011mcmc} by considering  generic standard deviations, ${\alpha}$ for  $q$  and  ${\beta}$ for $p$, with zero correlation. 
The Hamiltonian dynamics for $Q$ and $P$ define the equations 
\begin{equation}\label{eq:arm_general}
 \dfrac{dQ}{dt}\,=\, \dfrac{P}{\beta^2}, \qquad
 \dfrac{dP}{dt}\,=-\dfrac{Q}{\alpha^2}.
\end{equation}
Setting $\mathbf{Y}\,=[Q,\,P]^T$,  the Hamiltonian can be expressed as  $H(\mathbf{Y}) =\,\dfrac{1}{2} \, \mathbf{Y}^T \,
\mathcal{D}_2^{-1}
\mathbf{Y}\, =\, \dfrac{1}{2} \left(\dfrac{Q^2}{\alpha^2}\, +\, \dfrac{P^2}{\beta^2}\right)$  where 
$
\mathcal{D}_2\,: = \left[\begin{array}{lc}
 \alpha^2   & 0 \\
    0& \beta^2
\end{array}\right].
$ Starting from $Q_0,\, P_0$, the theoretical solution at $t_n= \, n\,h$ is represented as a  linear map $\mathbf{Y}(t_n)\, =\, {\mathcal{F}}^{(n\,h_\sigma,\, \sigma)}\, \mathbf{Y}_{0}$, where
$$
{\mathcal{F}}^{(n\, h_\sigma,\, \sigma)}\,:=\, \left[\begin{array}{lc}
   \cos\left(n \, h_\sigma\right)   & \sigma^{-1}  \,\sin\left({ n\,h_\sigma}\right)\,   \\\\
      -\sigma  \,\sin\left({ n\, h_\sigma}\right)\,&  \cos{\left( n\, h_\sigma\right)}   \end{array}\right],  \quad \sigma\, :=\, \dfrac{\beta}{\alpha}, \quad h_\sigma:=\dfrac{h}{\alpha\, \beta}.
$$

\noindent Notice that  Hamiltonian can be expressed as  $H(\mathbf{Y}) =\,\dfrac{1}{2\, \alpha\, \beta} \,   \left(\sigma \,Q^2 \,+\, \dfrac{P^2}{\sigma}\, \right)$.

\bigskip
\noindent We mentioned that the numerical map used to replace the theoretical solution with an approximation should  be volume-preserving (here equivalent to symplectic) and  momentum be flip-reversible. Both characteristics direct  our attention to the class  of integrators that,  when applied to the test problem (\ref{eq:arm_general}), can be expressed  as $$\mathbf{Y}_{n+1}\, =\, \mathcal{M}_2^{( h, \sigma)}\, \mathbf{Y}_{n}$$ where 
$\mathcal{M}_2^{( h, \sigma)}(1,1)\, =\, \mathcal{M}_2^{( h, \sigma)}(2,2)$
and $\det(\mathcal{M}_2^{( h, \sigma)})\,=\,1$.

\noindent Setting    $\mathrm{p}_{ h}=\mathcal{M}_2^{( h, \sigma)}(1,1)\, =\, \mathcal{M}_2^{( h, \sigma)}(2,2)$, \,   $\mathrm{q}_{ h}= \dfrac{\sigma}{\sigma^2+1}\,(\mathcal{M}_2^{(h, \sigma)}(1,2)\,-\, \mathcal{M}_2^{( h, \sigma)}(2,1))$ and 
$\mathrm{e}_{ h}\, = \, \dfrac{1}{\sigma^2+1}\,(\sigma^2 \mathcal{M}_2^{( h, \sigma)}(1,2)\,+\, \mathcal{M}_2^{( h, \sigma)}(2,1))
$, the matrix $\mathcal{M}_2^{( h, \sigma)}$ can be written as 
 \begin{equation} \label{eq:tildeM}
     \mathcal{M}_2^{{( h},\sigma)}\,=\, \left[\begin{array}{lc}
   \mathrm{p}_{ h}   &\mathrm{e}_{ h}\, +\, \sigma^{-1}  \,\, \mathrm{q}_{ h}  \\\\
      \mathrm{e}_{ h}\, -\, \sigma \,\, \mathrm{q}_{ h}&  \mathrm{p}_{ h}   \end{array}\right],
\end{equation}  
and, from $\det(\mathcal{M}_2^{( h, \sigma)})\, =\,1$,  the following relation holds 
\begin{equation}\label{det1}
\mathrm{p}_{ h}^2\,-\, (\mathrm{e}_{ h}\, +\, \sigma^{-1}  \, \mathrm{q}_{ h})\, ( \mathrm{e}_{ h}\, -\, \sigma \,\, \mathrm{q}_{ h})\, =\,1.    
\end{equation}

 \noindent The stability of the trajectories depends on eigenvalues of $\mathcal{M}_2^{( h, \sigma)}$
which solve the polynomial 
$$
\lambda^2\,-\, 2\, \mathrm{p}_{h}\,\lambda  +\,1\, =0.
$$
When $\mathrm{p}_{ h}^2 \,-\, 1\, \geq 0$ then the eigenvalues are real with at least one having absolute value greater than one, hence the trajectories are unstable. When $\mathrm{p}_{ h}^2 \,-\, 1\, <\, 0$ the eigenvalues are complex with modulus equal to one, hence the trajectories are stable.
\bigskip

\noindent The key consideration for what follows is that integrators for which it results  $\mathrm{e}_{h}\, =\,0$ are energy preserving. 
Indeed, the  error in energy at each step is given by 
$$
    \begin{array}{ccl}
       \Delta_2^{(n, h)}\,&:=  & H(\mathbf{Y}_{n+1})\, - H(\mathbf{Y}_n)\,=\,\dfrac{1}{2} \, \mathbf{Y}_{n+1}^T \, \mathcal{D}_2^{-1}\, \mathbf{Y}_{n+1}\,-\, \dfrac{1}{2} \, \mathbf{Y}_n^T \, \mathcal{D}_2^{-1}\,\, \mathbf{Y}_n\, \\\\
         &= &
         \dfrac{1}{2} \,\mathbf{Y}_n^T \, {\mathcal{M}_2^{( h, \sigma)}}^{T}\,\mathcal{D}_2^{-1}\,\,\,{\mathcal{M}_2^{( h, \sigma)}}\, \mathbf{Y}_{n}\,-\,\dfrac{1}{2} \, \mathbf{Y}_n^T \, \mathcal{D}_2^{-1}\,\, \mathbf{Y}_n\\\\\
        &=&  \dfrac{1}{2} \, \mathbf{Y}_n^T \,\left({\mathcal{M}_2^{( h, \sigma)}}^{T}\,\mathcal{D}_2^{-1}\,\,{\mathcal{M}_2^{( h, \sigma)}}\, -\, \mathcal{D}_2^{-1}\, \right)  \mathbf{Y}_n\\\\
        &=&
        \dfrac{1}{2} \, \mathbf{Y}_n^T \,\left({\mathcal{K}_{2}^{(h)}}^{T}\,\mathcal{K}_{2}^{(h)}\, -\, \mathcal{D}_2^{-1}\,\right)  \mathbf{Y}_n
    \end{array}
$$
where $\mathcal{K}_{2}^{(h)}\, =\, \mathcal{D}_2^{-1/2}\,{\mathcal{M}_2^{( h, \sigma)}} \, =\,\left(\begin{array}{cc}  \dfrac{\mathrm{p}_{ h}}{\alpha}  & \dfrac{\mathrm{e}_{ h}}{\alpha}  \,+\, \dfrac{\mathrm{q}_{ h}}{\beta}  \\\\ \dfrac{\mathrm{e}_{ h}}{\beta}  \,-\, \dfrac{\mathrm{q}_{ h}}{\alpha}&  \dfrac{\mathrm{p}_{ h}}{\beta}    \end{array}\right).$
Let us evaluate 
\begin{equation}\label{eq:matcalE}
{\mathcal{E}_{ 2}^{(h)}}\, =\,{\mathcal{K}_{2}^{(h)}}^{T}\,\mathcal{K}_{2}^{(h)}- \mathcal{D}_2^{-1}=\left(\begin{array}{cc} \dfrac{\mathrm{p}_{ h}^2\,-\,1}{\alpha^2}+ \left( \dfrac{\mathrm{e}_{ h} }{\beta}- \dfrac{\mathrm{q}_{ h} }{\alpha}\right)^2 & \left(\dfrac{1}{\alpha^2}+ \dfrac{1}{\beta^2}\right)\, \mathrm{e}_{ h}\,\mathrm{p}_{ h} \\\\ \left(\dfrac{1}{\alpha^2}+ \dfrac{1}{\beta^2}\right)\, \mathrm{e}_{ h}\,\mathrm{p}_{ h}  & \dfrac{\mathrm{p}_{ h}^2-1}{\beta^2} + \left( \dfrac{\mathrm{e}_{ h} }{\alpha}+ \dfrac{\mathrm{q}_{ h} }{\beta}\right)^2 \end{array}\right)
    \end{equation}
so that  
$\Delta_2^{(n,h)}\, = \dfrac{1}{2} \, \mathbf{Y}_n^T \,\mathcal{E}_{2}^{ (h)}\, \mathbf{Y}_n$, for $n=0,\dots,\,N$
and 
\begin{equation}\label{eq:expect}
    \Delta_2^{(N)}\, := H(\mathbf{Y}_N)\, - H(\mathbf{Y}_0)\, =\, \displaystyle \sum_{n=0}^N \Delta_2^{(n,h)} \,= \dfrac{1}{2} \, \displaystyle \sum_{n=0}^N \mathbf{Y}_n^T \,\mathcal{E}_{2}^{(h)}\, \mathbf{Y}_n. 
    \end{equation}
\begin{theorem}\label{thm:univariatecase}
Consider the Hamiltonian test problem (\ref{eq:arm_general}) and a   symplectic and  momentum flip - reversible integrator which  can be expressed  as 
$\mathbf{Y}_{n+1}\, =\, \mathcal{M}_2^{( h, \sigma)}\, \mathbf{Y}_{n}$ with $\mathcal{M}_2^{( h, \sigma)}$ defined in (\ref{eq:tildeM}),
when applied to  (\ref{eq:arm_general}). If it results that  $\mathrm{e}_{ h}\, =\, 0$, then the integrator preserves the Hamiltonian.
\end{theorem}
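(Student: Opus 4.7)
The plan is to show that under the assumption $\mathrm{e}_h=0$, the symmetric matrix $\mathcal{E}_2^{(h)}$ displayed in (\ref{eq:matcalE}) vanishes identically; the conclusion then follows immediately from the identity $\Delta_2^{(n,h)}=\tfrac{1}{2}\mathbf{Y}_n^T\mathcal{E}_2^{(h)}\mathbf{Y}_n$ together with the telescoping sum in (\ref{eq:expect}).

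First I would inspect the off-diagonal entries of $\mathcal{E}_2^{(h)}$: both are proportional to the product $\mathrm{e}_h\,\mathrm{p}_h$, so they vanish trivially as soon as $\mathrm{e}_h=0$, with no need to involve $\mathrm{p}_h$. The work is concentrated in the diagonal. With $\mathrm{e}_h=0$, the $(1,1)$ entry reduces to $(\mathrm{p}_h^2-1)/\alpha^2+\mathrm{q}_h^2/\alpha^2$ and the $(2,2)$ entry to $(\mathrm{p}_h^2-1)/\beta^2+\mathrm{q}_h^2/\beta^2$. The key algebraic ingredient is the determinant/symplecticity condition (\ref{det1}), which upon setting $\mathrm{e}_h=0$ collapses to
\begin{equation*}
\mathrm{p}_h^2 - (\sigma^{-1}\mathrm{q}_h)(-\sigma\,\mathrm{q}_h) = \mathrm{p}_h^2 + \mathrm{q}_h^2 = 1,
\end{equation*}
so that $\mathrm{p}_h^2-1=-\mathrm{q}_h^2$. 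Substituting this into each diagonal entry produces $(-\mathrm{q}_h^2+\mathrm{q}_h^2)/\alpha^2=0$ and similarly for the $(2,2)$ slot.

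Having shown $\mathcal{E}_2^{(h)}=0$, the per-step energy error $\Delta_2^{(n,h)}=\tfrac{1}{2}\mathbf{Y}_n^T\mathcal{E}_2^{(h)}\mathbf{Y}_n$ vanishes for every $n$, regardless of the state $\mathbf{Y}_n$, and hence $\Delta_2^{(N)}=\sum_{n=0}^{N}\Delta_2^{(n,h)}=0$, i.e.\ $H(\mathbf{Y}_N)=H(\mathbf{Y}_0)$ for all $N$. I do not expect a genuine obstacle: the whole argument is a short computation. The only subtle point worth flagging in the write-up is the proper use of the symplecticity relation (\ref{det1}) to cancel the $\mathrm{p}_h^2-1$ terms against the $\mathrm{q}_h^2$ terms on the diagonal, which is what converts the pointwise assumption $\mathrm{e}_h=0$ into the global identity $\mathcal{E}_2^{(h)}\equiv 0$ and thus into exact energy preservation rather than merely a bounded error.
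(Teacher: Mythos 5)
Your argument is correct and coincides with the paper's own proof: both show that $\mathrm{e}_h=0$ kills the off-diagonal entries of $\mathcal{E}_2^{(h)}$ and then invoke the determinant relation (\ref{det1}) to obtain $\mathrm{p}_h^2+\mathrm{q}_h^2=1$, which annihilates the diagonal entries. The only difference is presentational — you spell out the substitution $\mathrm{p}_h^2-1=-\mathrm{q}_h^2$ explicitly, while the paper states the resulting identity directly.
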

\begin{proof}
It is enough to observe that, whenever  $\mathrm{e}_{ h}\, =\, 0$, the matrix ${\mathcal{E}_{ 2}^{(h)}}$ in (\ref{eq:matcalE}) has null entries on the right-left diagonal. From relation (\ref{det1}), it follows that on the principal diagonal  ${\mathcal{E}_{ 2}^{(h)}}(1,1)\, =\, \dfrac{\mathrm{p}_{ h}^2\,+\, \mathrm{q}_{ h}^2-\,1}{\alpha^2}\,=\, {\mathcal{E}_{ 2}^{(h)}}(2,2)\, =\, \dfrac{\mathrm{p}_{ h}^2\,+\, \mathrm{q}_{ h}^2-\,1}{\beta^2}\, =\,0$ which completes the proof. 
\end{proof}

\begin{theorem}\label{eq:expectation1}
Assume that $Q_0$, $P_0$ are two random variables with Gaussian zero-mean distribution, standard deviations $\alpha$ and $\beta$ respectively and zero correlation. Suppose that the Hamiltonian dynamics (\ref{eq:arm_general}) is approximated by means of a linear map $\mathbf{Y}_{n+1}\, =\, \mathcal{M}_2^{( h, \sigma)}\, \mathbf{Y}_{n}$  with $\mathcal{M}_2^{( h, \sigma)}$ given in (\ref{eq:tildeM}). Then, the expectation of the random variable $\Delta_2^{(N)}$ in (\ref{eq:expect}) 
is given by 
$$
\mathbb{E}(\Delta_2^{(N)})\,=\,\dfrac{N}{2}\, \left(\sigma\,+\, \dfrac{1}{\sigma}\right)^2\, \mathrm{e}^2_{ h}
$$
and, consequently,
$\mathbb{E}(\Delta_2^{(N)})\,=\,0$  iff  $\mathrm{e}_{ h}\,=\,0$.
\end{theorem}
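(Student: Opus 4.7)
The plan is to express $\mathbb{E}(\Delta_2^{(N)})$ as a sum of traces via the quadratic-form identity already established in (\ref{eq:expect}), and then evaluate the single key trace with the help of the determinant relation (\ref{det1}). By linearity of expectation and the standard identity $\mathbb{E}[\mathbf{Y}^T A \mathbf{Y}]=\mathrm{tr}(A\,\Sigma)$ valid for any zero-mean random vector $\mathbf{Y}$ with covariance $\Sigma$, the sum in (\ref{eq:expect}) becomes
\begin{equation*}
\mathbb{E}(\Delta_2^{(N)})=\frac{1}{2}\sum_{n}\mathrm{tr}\!\left(\mathcal{E}_{2}^{(h)}\,C_n\right),\qquad C_n:=\mathbb{E}[\mathbf{Y}_n\mathbf{Y}_n^T].
\end{equation*}
The hypothesis on $(Q_0,P_0)$ gives $C_0=\mathcal{D}_2$; I would then use the role of the zero-mean Gaussian with covariance $\mathcal{D}_2$ as the canonical distribution associated with $H(\mathbf{Y})=\frac{1}{2}\mathbf{Y}^T\mathcal{D}_2^{-1}\mathbf{Y}$ to take $C_n=\mathcal{D}_2$ at every $n$, so that all $N$ summands coincide and the sum reduces to $\frac{N}{2}\,\mathrm{tr}(\mathcal{E}_{2}^{(h)}\mathcal{D}_2)$.

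The heart of the proof is then the explicit evaluation of $\mathrm{tr}(\mathcal{E}_{2}^{(h)}\mathcal{D}_2)$. Reading the diagonal of $\mathcal{E}_{2}^{(h)}$ from (\ref{eq:matcalE}) and noting that the off-diagonal entries drop out against the diagonal matrix $\mathcal{D}_2$,
\begin{equation*}
\mathrm{tr}(\mathcal{E}_{2}^{(h)}\mathcal{D}_2)=2(\mathrm{p}_h^2-1)+\big(\mathrm{e}_h/\sigma-\mathrm{q}_h\big)^2+\big(\sigma\mathrm{e}_h+\mathrm{q}_h\big)^2.
\end{equation*}
Expanding the two squares produces $\mathrm{e}_h^2(\sigma^2+1/\sigma^2)+2\mathrm{q}_h^2+2\mathrm{e}_h\mathrm{q}_h(\sigma-1/\sigma)$. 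At this point the determinant identity (\ref{det1}), rewritten as $\mathrm{p}_h^2-1=\mathrm{e}_h^2-\mathrm{q}_h^2-\mathrm{e}_h\mathrm{q}_h(\sigma-1/\sigma)$, cancels the $\mathrm{q}_h^2$ and mixed $\mathrm{e}_h\mathrm{q}_h$ contributions, leaving $\mathrm{e}_h^2(2+\sigma^2+1/\sigma^2)=(\sigma+1/\sigma)^2\mathrm{e}_h^2$. Multiplying by $N/2$ from the previous step yields the claimed formula, and since $(\sigma+1/\sigma)^2>0$ the equivalence with $\mathrm{e}_h=0$ is immediate.

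The step I anticipate as the main obstacle is the covariance reduction $C_n=\mathcal{D}_2$: since $\mathcal{M}_{2}^{(h,\sigma)}$ is symplectic but not orthogonal in the metric induced by $\mathcal{D}_2$, the covariance genuinely satisfies the non-trivial recursion $C_{n+1}=\mathcal{M}_{2}^{(h,\sigma)}C_n\,(\mathcal{M}_{2}^{(h,\sigma)})^T$, and identifying $C_n$ with $\mathcal{D}_2$ at every index $n$ needs to be justified by the role of this Gaussian as the target distribution in the HMC framework (where momentum is refreshed and the analysis is carried out at equilibrium). Once that identification is in place, the remaining manipulations are the elementary algebra just outlined, and the final equivalence follows because $(\sigma+1/\sigma)^2$ never vanishes.
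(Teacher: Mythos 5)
Your proof is correct and follows essentially the same route as the paper: the paper likewise reduces everything to the per-step expectation $2\,\mathbb{E}(\Delta_2^{(n,h)}) = 2(\mathrm{p}_{h}^2-1) + (\sigma^{-1}\mathrm{e}_{h} - \mathrm{q}_{h})^2 + (\sigma\mathrm{e}_{h} + \mathrm{q}_{h})^2$ --- your $\mathrm{tr}(\mathcal{E}_{2}^{(h)}\mathcal{D}_2)$, obtained there by expanding the quadratic form and using $\mathbb{E}(Q_0^2)=\alpha^2$, $\mathbb{E}(P_0^2)=\beta^2$, $\mathbb{E}(Q_0P_0)=0$ --- simplifies it with (\ref{det1}) exactly as you do, and multiplies by $N$. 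The step you flag as the main obstacle, taking $C_n=\mathcal{D}_2$ for every $n$, is made silently in the paper (each $\Delta_2^{(n,h)}$ is simply written as a quadratic form in $(Q_0,P_0)$), so your argument is no less rigorous than the paper's own and is more candid about where that identification is used.
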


\begin{proof}
From  $\Delta_2^{(n, h)}\, =\, \dfrac{1}{2} \, \mathbf{Y}_n^T \,\mathcal{E}_{2}^{(h)}\, \mathbf{Y}_n$, we can evaluate 
$$
\begin{array}{rcl}
  2\, \Delta_2^{(n, h)}\,    & = & \left[\dfrac{\mathrm{p}_{ h}^2\,-\,1}{\alpha^2}\, \,+\, \left( \dfrac{\mathrm{e}_{ h} }{\beta}\,-\, \dfrac{\mathrm{q}_{ h} }{\alpha}\right)^2\right] Q_0^2\,+\,2\,  \mathrm{e}_{ h}\,\mathrm{p}_{ h}
\left(\dfrac{1}{\alpha^2}\,+\, \dfrac{1}{\beta^2}\right)\, \, Q_0\, P_0 \\\\
 &+&\left[\dfrac{\mathrm{p}_{ h}^2\,-\,1}{\beta^2}\, \,+\, \left( \dfrac{\mathrm{e}_{ h} }{\alpha}\,+\, \dfrac{\mathrm{q}_{ h} }{\beta}\right)^2\right] \,  P^2_0 
\end{array}
$$ 
for $n=0,\dots,\,N$. From  $\mathbb{E}(Q^2_0)\, =\, \alpha^2$, $\mathbb{E}(P^2_0)\, =\, \beta^2$, $\mathbb{E}(Q_0\, P_0)\, =\, 0$, it results that 
$$
 2\,  \mathbb{E}(\Delta_2^{(n,h)})\,  = \,  2\left(\mathrm{p}_{ h}^2\,-\,1\right)\,+\, \left( \sigma^{-1} \, \mathrm{e}_{ h} \,-\,\mathrm{q}_{ h} \right)^2 \, + \left( \sigma \, \mathrm{e}_{ h}  \,+\,\mathrm{q}_{ h} \right)^2 \, = \,\left( \sigma\, \mathrm{e}_{ h}\,+\, \sigma^{-1}\, \mathrm{e}_{ h} \right)^2  
$$ 
and the statement trivially  follows.
\end{proof}

\bigskip
\noindent The following result generalizes Proposition $4.3$ in \cite{blanes2014numerical} for Gaussian zero-mean distributions with generic standard deviations $\alpha$ and $\beta$:
\begin{theorem}\label{thm:expectation2}
Under the same hypothesis of Theorem \ref{thm:univariatecase}, assuming  $|\mathcal{M}_2^{(h, \sigma)}|<1$, the expectation of the random variable $\Delta_2^{(N)}$ in (\ref{eq:expect}) can be expressed as 
$$
\mathbb{E}(\Delta_2^{(N)})\,=\,N \sin^2{( h_{\chi_h})} \, \rho(h), \qquad \rho(h)\,=\, \dfrac{1}{2}\, \left({\tilde \chi}_h\, -\, \dfrac{1}{{\tilde \chi}_h} \right)^2,\, \qquad {\tilde \chi_h}\, := \sigma\, \chi_h^{-1} .
$$
\end{theorem}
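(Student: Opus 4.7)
The plan is to recognize that the result is essentially a reparametrization of Theorem~\ref{eq:expectation1}, which already gives $\mathbb{E}(\Delta_2^{(N)}) = \frac{N}{2}(\sigma + \sigma^{-1})^2 \mathrm{e}_h^2$. All that remains is to exhibit the algebraic identity $(\sigma + \sigma^{-1})^2 \mathrm{e}_h^2 = 2 \sin^2(h_{\chi_h}) \rho(h)$, after identifying the geometric meaning of the new parameters $h_{\chi_h}$ and $\chi_h$ hidden in the statement.

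First I would unpack the hypothesis $|\mathcal{M}_2^{(h,\sigma)}|<1$ as the trace-based stability bound $\mathrm{p}_h^2 < 1$, so that one may define $h_{\chi_h}$ via $\mathrm{p}_h = \cos(h_{\chi_h})$. With this, the determinant relation (\ref{det1}) rewrites as
$$(\mathrm{e}_h + \sigma^{-1}\mathrm{q}_h)(\mathrm{e}_h - \sigma\mathrm{q}_h) \,=\, \mathrm{p}_h^2 - 1 \,=\, -\sin^2(h_{\chi_h}),$$
forcing the two factors to have opposite signs. I would then introduce $\chi_h > 0$ (with derived quantity $\tilde\chi_h = \sigma/\chi_h$) through the parametrization
$$\mathrm{e}_h + \sigma^{-1}\mathrm{q}_h \,=\, -\sin(h_{\chi_h})/\chi_h, \qquad \mathrm{e}_h - \sigma\mathrm{q}_h \,=\, \chi_h \sin(h_{\chi_h}),$$
which is compatible with the above constraint and corresponds geometrically to the diagonal similarity that conjugates $\mathcal{M}_2^{(h,\sigma)}$ to a standard planar rotation $R(h_{\chi_h})$; this gives $h_{\chi_h}$ and $\chi_h$ the natural interpretation of effective angle and effective aspect ratio of the numerical iteration.

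The target identity then falls out from a short elimination: multiplying the first parametrization equation by $\sigma$, the second by $\sigma^{-1}$, and adding the two cancels $\mathrm{q}_h$ and produces
$$(\sigma + \sigma^{-1})\, \mathrm{e}_h \,=\, -\sin(h_{\chi_h})\left(\sigma/\chi_h - \chi_h/\sigma\right) \,=\, -\sin(h_{\chi_h})(\tilde\chi_h - 1/\tilde\chi_h).$$
Squaring and recalling $\rho(h) = \frac{1}{2}(\tilde\chi_h - 1/\tilde\chi_h)^2$ yields $(\sigma + \sigma^{-1})^2 \mathrm{e}_h^2 = 2 \sin^2(h_{\chi_h}) \rho(h)$, and plugging this into the formula from Theorem~\ref{eq:expectation1} gives $\mathbb{E}(\Delta_2^{(N)}) = N \sin^2(h_{\chi_h})\, \rho(h)$ as required.

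The only delicate step I anticipate is the bookkeeping around signs: one must verify that stability indeed ensures the two factors in the determinant relation have opposite signs (immediate, since their product is $-\sin^2(h_{\chi_h}) \le 0$), and then fix a branch so that $\chi_h$ is real and positive consistently with the choice of sign for $\sin(h_{\chi_h})$ coming from $\mathrm{p}_h = \cos(h_{\chi_h})$. Beyond this sign-consistency check, the argument is purely algebraic, and it also makes transparent why Theorem~\ref{Th2} can engineer energy preservation by tuning the step to make $\chi_h = \sigma$, equivalently $\tilde\chi_h = 1$.
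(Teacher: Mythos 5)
Your argument is correct and follows essentially the same route as the paper: define $h_{\chi_h}=\arccos\mathrm{p}_h$ from the stability hypothesis, use the determinant relation (\ref{det1}) to introduce $\chi_h$ through the factorization of $\mathrm{p}_h^2-1$, eliminate $\mathrm{q}_h$ to get $(\sigma+\sigma^{-1})^2\mathrm{e}_h^2=2\sin^2(h_{\chi_h})\rho(h)$, and conclude via Theorem \ref{eq:expectation1}. The only difference is that your sign convention for the two factors is the opposite of the paper's (equivalent to $\chi_h\mapsto-\chi_h$), which is immaterial since the final expression is even in $\chi_h$ — though note that with your signs and $\sin(h_{\chi_h})\ge 0$ the "positive branch" for $\chi_h$ would actually fail for the standard integrators where $\mathrm{q}_h>0$ dominates $\mathrm{e}_h$.
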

\begin{proof}
Under the assumption $|\mathcal{M}_2^{( h, \sigma)}|\, <1$ then $|\mathrm{p}_{ h}|\,<\, 1 $ and we can define  $h_{\chi_h}\,=\, \arccos{\mathrm{p}_{ h}}, \, h_{\chi_h} \in [0,\, \pi]$ and $\sin{h_{\chi_h}}\, =\, \sqrt{1\,-\,\mathrm{p}^2_{ h}}$. From $\sin^2{( h_{\chi_h})}\,=\, 1\, -\, \mathrm{p}^2_{ h}$, and exploiting  the relations $$ \dfrac{1}{\chi_h} \,=\, \dfrac{\mathrm{e}_{ h} \,+\, \sigma^{-1} \, \mathrm{q}_{ h}}{\sqrt{1\, -\, \mathrm{p}^2_{ h}}}, \qquad \chi_h  =\, \dfrac{\sigma \, \mathrm{q}_{ h}-\mathrm{e}_{ h}}{ \sqrt{1\, -\, \mathrm{p}^2_{ h}}}, $$ which are both satisfied from (\ref{det1}),  we can prove that  
$$2\, \sin^2{( h_{\chi_h})}\, \rho(h)\,=\sin^2{( h_{\chi_h})} \left( \dfrac{\sigma}{ { \chi}_h} - \dfrac{{ \chi}_h}{ \sigma} \right)^2\,  =\, \left(\sigma\,+\, \dfrac{1}{\sigma}\right)^2\, \mathrm{e}^2_{ h}. $$ 
From Theorem \ref{eq:expectation1} the result follows.     \end{proof}

\subsection{Multivariate case}\label{sec:3.2}
The motion of $d$ oscillators
\begin{equation}\label{eq:arm_generalmulti}
 \dfrac{dQ_j}{dt}\,=\, \dfrac{P_j}{\beta_j^2}, \qquad
 \dfrac{dP_j}{dt}\,=-\dfrac{Q_j}{\alpha_j^2}, \qquad  \text{for} \quad j=1,\dots d,
\end{equation}
can be represented as an Hamiltonian system  
\begin{equation}\label{arm_osc_multgen}
\dfrac{d\mathbf{Q}}{dt}\,=\, D^{-1}_{\beta}  \mathbf{P},\qquad 
 \dfrac{d\mathbf{P}}{dt}\,=\, -D^{-1}_{\alpha}  \mathbf{Q} 
\end{equation}
 where $D_{\alpha}$ and $D_{\beta}$ are $d\, \times d$ diagonal matrices with entries $\alpha_j^2$ and $\beta_j^2$, respectively, for $j=1,\dots,\, d$ 
 and Hamiltonian function  $\dfrac{1}{2} \, \mathbf{Q}^T \, {D}_{\alpha}^{-1}\, \mathbf{Q} + \dfrac{1}{2} \, \mathbf{P}^T \,  {D}_{\beta}^{-1}\, \mathbf{P}.$ 
 Setting $\mathbf{Y}\,=[\mathbf{Q},\,\mathbf{P}]^T$, $
\mathcal{D}_{2d}\, := \left[\begin{array}{lc}
    D_{\alpha}  & \textbf{0}_d\\
  \textbf{0}_d& D_\beta
  \end{array}\right]$ the Hamiltonian can be written as  
  \begin{equation}\label{eq:Hmulti}
      H(\mathbf{Y}) =\,\dfrac{1}{2} \, \mathbf{Y}^T \,  \mathcal{D}_{2d}^{-1}\, \mathbf{Y}.  \end{equation}

\noindent   Define $\Sigma\, = \, D^{1/2}_{\beta}\, D^{-1/2}_{\alpha}$;
then a    symplectic and  momentum flip - reversible integrator for the  $d$-dimensional system (\ref{arm_osc_multgen}) can be expressed  as $\mathbf{Y}_{n+1}\, =\, \mathcal{M}_{2d}^{( h, \Sigma)}\, \mathbf{Y}_{n}$ where
\begin{equation} \label{eq:tildeMmulti}
     \mathcal{M}_{2d}^{{( h},\Sigma)}\,=\, \left[\begin{array}{lc}
   \mathrm{P}_{ h}   &\mathrm{E}_{ h}\, +\, \Sigma^{-1}  \,\, \mathrm{Q}_{ h}  \\\\
      \mathrm{E}_{ h}\, -\, \Sigma \,\, \mathrm{Q}_{h}&  \mathrm{P}_{ h}   \end{array}\right],
\end{equation}
where $\mathrm{P}_{h}$,  $\mathrm{Q}_{ h}$ and $ \mathrm{E}_{ h}$ are $d$ dimensional diagonal matrices satisfying 
$$
\mathrm{P}_{h}^2\,-\, (\mathrm{E}_{h}\, +\, \Sigma^{-1}  \, \mathrm{Q}_{h})\, ( \mathrm{E}_{h}\, -\, \Sigma \,\, \mathrm{Q}_{h})\, =\,I_d.    
$$

\noindent Similary to the univariate case, the  error in energy at each step is given by 
\begin{equation}\label{eq:energy_error_2d}
    \begin{array}{c}
     \Delta_{2d}^{(n, h)}\,=\, H(\mathbf{Y}_{n+1})\, - H(\mathbf{Y}_n)\,=\, 
        \dfrac{1}{2} \, \mathbf{Y}_n^T \,\left({\mathcal{K}_{2d}^{(h)}\, }^{T}\,\mathcal{K}_{2d}^{(h)}\, \, -\, \mathcal{D}_{2d}^{-1}\,\right)  \mathbf{Y}_n
    \end{array}
\end{equation}
where $\mathcal{K}_{2d}^{(h)}\, :=\, \mathcal{D}_{2d}^{-1/2}\,{\mathcal{M}_{2d}^{( h, \sigma)}} \, =\left(\begin{array}{cc}  D_{\alpha}^{-1/2}\, \mathrm{P}_{ {h}}   & D_{\alpha}^{-1/2} \mathrm{E}_{ h} + D_{\beta}^{-1/2} \,\mathrm{Q}_{ {h}}     \\\\  D_{\beta}^{-1/2} \mathrm{E}_{ h}  - D_{\alpha}^{-1/2} \,\mathrm{Q}_{ {h}}  &  D_{\beta}^{-1/2}\, \mathrm{P}_{ {h}}   \end{array}\right).$

When $\mathrm{E}_{ h}= \mathbf{0}_{d \times d}$ then the matrix ${\mathcal{E}_{ 2d}^{(h)}} \, :=\,{\mathcal{K}_{2d}^{(h)}}^{T}\,\mathcal{K}_{2d}^{(h)} \,-\, \mathcal{D}_{2d}^{-1}$ is given by 
\begin{equation}\label{eq:matcalEsigmahb}
{\mathcal{E}_{2d}^{(h)}\,} :=
\left(\begin{array}{cc} 
\mathrm{P}_{ h}\, D_{\alpha}\,\mathrm{P}_{ h}
 +  \mathrm{Q}_{ h}\, D_{\alpha}\,\mathrm{Q}_{  h}\, -\, D_\alpha& \mathbf{0}_d\\\\
\mathbf{0}_d &  \mathrm{P}_{ h}\, D_\beta\,\mathrm{P}_{ h}
 + \mathrm{Q}_{  h}\,  D_\beta\,\mathrm{Q}_{ h}\, -\,   D_\beta
 \end{array}\right).
    \end{equation}
 \noindent Since $\mathrm{P}_{ h}$, $\mathrm{Q}_{ h}$, $ D_\alpha$ and $ D_\beta$ are diagonal matrices and $\mathrm{P}^2_{ h}\, +\, \mathrm{Q}^2_{ h}\, = \mathrm{I}_{d}$, then ${\mathcal{E}_{2d}^{(h)}\,} =\, \mathbf{0}_{2d \times 2d}$.


\section{Splitting methods}\label{sec:4}
\noindent  Usually, the  map  implemented within a HMC algorithm is  the  St\"ormer-Verlet method which lies in the class of symmetric splitting methods.
There are several attempts in literature \cite{blanes2021symmetrically},\cite{calvo2019hmc},\cite{baker2018optimal},\cite{baker2019optimal} to introduce more accurate maps in the same class, where {\it accuracy} refers to the performance of the map within the HCM algorithm rather than to the accuracy in approximating the dynamical flow.
Symmetric splitting methods are based on the splitting of the  flow in two (or more) semiflows and is built as a symmetric composition  of   semiflows. When applied to  Hamiltonian dynamics, the  semiflows are themselves Hamiltonian flows, so that they are volume-preserving and reversible maps. The composition of volume-preserving maps results in a volume-preserving map; moreover, as the semiflows are reversible and the composition is symmetric, the splitting map  results reversible (for a detailed proof see \cite{blanes2014numerical}).

\subsection{St\"ormer-Verlet method}\label{sec:4.1}
\noindent The  St\"ormer-Verlet method  is based on the splitting of the  flow in two  semiflows and is built as a symmetric composition  of   semiflows. 
\noindent Setting  $\mathbf{Y}=[\mathbf{Q},\,\mathbf{P}]^T \in \mathbb{R}^{2d}$, it can be useful to denote the Hamiltonian dynamics (\ref{eq:HS}) in vector form as 
$$
\dfrac{d\mathbf{Y}}{dt}\, =\, 
f(\mathbf{Y}):= \left[D^{-1}_{\beta} \mathbf{P},\,-\nabla_{\mathbf{Q}}\,U(\mathbf{Q})\right]^T. 
$$
Let  $\varphi^{[\mathbf{P}]}_t$ and  $\varphi^{[\mathbf{Q}]}_t$ represent the exact flows associated to the dynamics $\dfrac{d\mathbf{Y}}{dt} \,=\,f^{[\mathbf{P}]}(\mathbf{Y})$ and $\dfrac{d\mathbf{Y}}{dt}\,=\,f^{[\mathbf{Q}]}(\mathbf{Y})$, where $f\,= f^{[\mathbf{P}]}\,+\, f^{[\mathbf{Q}]}$ and  $$f^{[\mathbf{P}]}(\mathbf{Y}):=[D^{-1}_{\beta} \mathbf{P},\,\mathbf{0}_d]^T,\qquad f^{[\mathbf{Q}]}(\mathbf{Y}):= [\mathbf{0}_d,\,-\nabla_{\mathbf{Q}}\,U(\mathbf{Q})]^T.$$
The map $\mathbf{Y}_{n+1}=\Psi_h^{(SV)}(\mathbf{Y}_n)$, with
\begin{equation}\label{eq:stormer-verlet}
 \Psi_h^{(SV)}:=\varphi^{[\mathbf{Q}]}_{h/2} \circ
 \varphi^{[\mathbf{P}]}_{h} \circ 
  \varphi^{[\mathbf{Q}]}_{h/2},
\end{equation}
defines the  ({\it velocity}) St\"ormer-Verlet method.\footnote{We mention that the {\it position} St\"ormer-Verlet method starts the integration by solving the semiflow $f^{[P]}$ so that $ \Psi_h^{(SV)}:=\varphi^{[P]}_{h/2} \circ
 \varphi^{[Q]}_{h} \circ 
  \varphi^{[P]}_{h/2}$.}
\bigskip

\noindent A St\"ormer-Verlet step applied to the linear test problem (\ref{eq:arm_general})  will be a linear map, represented in matrix form as $\mathbf{Y}_{n+1}\, =\, \mathcal{M}_2^{( h, \sigma)}\, \mathbf{Y}_{n}$ where $\mathcal{M}_2^{( h, \sigma)}$ is given in (\ref{eq:tildeM}) and 
$$
\mathrm{p}_{ h}\, =\, 1\,-\, \dfrac{ h_\sigma^2}{2},\quad \mathrm{e}_{h}\, = \, \dfrac{\sigma \,{h_\sigma}^3}{4\, (\sigma^2\,+\,1)},\quad  \mathrm{q}_{ h}\, =\,  h_\sigma\,-\,\sigma \, \mathrm{e}_{ h}
$$
with $h_\sigma\,=\, \dfrac{h}{\alpha\, \beta}.$
For $h< 2 \, \alpha \, \beta$  it results   $h_\sigma \,\leq\, 2$, and then the  trajectories are stable as it results  $\mathrm{p}_{ h}^2 \,-\, 1\, <\, 0$.
\bigskip

\noindent Since $\mathrm{e}_{ h}\,\neq \, 0$, the St\"ormer-Verlet integrator cannot preserve the energy when applied to the linear test model (\ref{eq:arm_general}). From Theorem \ref{eq:expectation1} the expectation of the random variable $\Delta_2^{(N)}$ is given by 
$$
\mathbb{E}(\Delta_2^{(N)})\,=\,\dfrac{N}{2}\, \left(\sigma\,+\, \dfrac{1}{\sigma}\right)^2\, \left(\dfrac{\sigma \,{h_\sigma}^3}{4\, (\sigma^2\,+\,1)}\right)^2\, =\, \dfrac{N}{32}\,  h_\sigma^6 \, = \, T^*\,\left(\frac{ h_\sigma}{2}\right)^5.
$$
\subsection{One-parameter  family of second order splitting methos}\label{sec:4.2}
\noindent Different improvements of the St\"ormer-Verlet method can be found in literature. Often the idea  is to tune  some free parameter in some suitable class of methods in order to maximizing, for the linear test model, the length of the stability interval, subject to the annihilation of some error constants as in  \cite{predescu2012computationally} or to ensure good conservation
of energy properties in linear problems so reducing the energy error as in  \cite{blanes2014numerical}.
In both cases the aim is to suggest methods able to  increase the number of accepted proposals in the  HCM algorithm with respect to the   St\"ormer-Verlet method.

\bigskip

\noindent In this paper a new criterion is adopted for tuning the free parameter $b \in \mathbb{R}$ in the class of second order splitting methods, called nSP2S (new splitting two step method)  
\begin{equation}\label{composition_2old}
 \Psi_h^{(b)}:=\varphi^{[\mathbf{Q}]}_{b \,h} \circ
 \varphi^{[\mathbf{P}]}_{h/2} \circ 
 \varphi^{[\mathbf{Q}]}_{(1-2\,b)h} \circ
 \varphi^{[\mathbf{P}]}_{h/2} \circ \varphi^{[\mathbf{Q}]}_{b \, h}.
\end{equation}
 The aim is to exactly preserve the energy so to have all proposals accepted when HCM is applied to Gaussian distributions.  

\noindent Let us  underline that this idea is not novel in the field of numerical approximation of Hamiltonian dynamics \cite{pace2015splitting}. However, the benefits of this approach have not been analyzed in the field of  Hamiltonian Monte Carlo algorithms. Before proceeding, as observed in \cite{blanes2014numerical}, notice that the  mapping $\Psi_h^{(b)}$ in (\ref{composition_2old}) is volume-preserving, reversible and symplectic. Moreover, we will consider $b\,\neq\, 0,\, 1/2$  as the method reduces to the classical velocity and position St\"ormer-Verlet integrators in these cases.

\bigskip

\noindent The class of second order methods $\mathbf{Y}_{n+1}=\Psi_h^{(b)}(\mathbf{Y}_n)$,  with $\Psi_h^{(b)}$ given in (\ref{composition_2old}) when applied to the model test system (\ref{eq:arm_general}) can be written as  a linear map, represented in matrix form as $\mathbf{Y}_{n+1}\, =\,\mathcal{M}_2^{( h, \sigma)}\,  \mathbf{Y}_{n}$ where $\mathcal{M}_2^{( h, \sigma)}\,  $ is given in (\ref{eq:tildeM}) and 
$$
\begin{array}{l}
  \mathrm{p}_{ h}\, =\, 1\,-\, \dfrac{ h_\sigma^2}{2}\,+\, \dfrac{h_\sigma^4}{4}\, b\,(1\,-\,2\,b) ,  \\\\
  \mathrm{q}_{ h}\, =\, \dfrac{b^2(1-2\,b)}{4\,\left(\sigma ^2+1\right)}\,{{h_\sigma }}^5+\dfrac{4\,b^2+2\,b\,\sigma ^2-4\,b-\sigma ^2}{4\left(\sigma ^2+1\right)}\,{{h_\sigma }}^3\,+\,{h_\sigma}.
\end{array}
$$
and 
\begin{equation}\label{eq:econh}
  \mathrm{e}_{ h}\, =  \mathrm{e}_{ h}\,(b) \, =\,\dfrac{h_\sigma^3 \sigma}{4(\sigma^2+1)} \left(2\,b^3\,{{h_\sigma }}^2-b^2\,{{h_\sigma}}^2-4\,b^2+6\,b-1\right),
\end{equation}
where, as before, $ h_\sigma\,:=\, \dfrac{h}{\alpha\, \beta}.$

\noindent  The stability interval can be deduced from the known result given in \cite{blanes2014numerical} i.e. 
 \begin{equation}\label{eq:stability}
 0\,<\, h_\sigma=\dfrac{h}{\alpha\, \beta} \, <\, \min\left\{ \,\sqrt{\dfrac{2}{b}}, \, \sqrt{\dfrac{2}{1/2\,-\,b}}\,\right\},  \qquad 0\, < \, b\, < \dfrac{1}{2}.
 \end{equation}

\noindent The application of Theorem \ref{eq:expectation1} gives the expectation of the random variable  $\Delta_2^{(N)}$ 
$$
\mathbb{E}(\Delta_2^{(N)})\,=\, T^*\,\left(\frac{ h_\sigma}{2}\right)^5 \left(2\,b^3\,{{h_\sigma }}^2-b^2\,{{h_\sigma}}^2-4\,b^2+6\,b-1\right)^2
$$
which can be nullified exploiting the following result which generalizes Theorem $1$ given  in \cite{pace2015splitting}.
\begin{theorem}\label{Th1}
For all $b,\,h >0$ define  
\begin{equation}\label{eq:rdib}
R(b,h)\, :=2\,\left(\dfrac{h}{\alpha\, \beta}\right)^2\,b^3\,-\,\left(4+ \left(\dfrac{h}{\alpha\, \beta}\right)^2\right)\,b^2\,+\,6\,b\,-\,1.
\end{equation}
Fix $h>0$ and consider $b_{h}$ a real root of the  third degree polynomial (\ref{eq:rdib}) in the variable $b$; 
then the scheme $\mathbf{Y}_{n+1}=\Psi_h^{(b_h)}(\mathbf{Y}_n)$,  with $\Psi_h^{(b_h)}$ given in (\ref{composition_2old}) is  energy-preserving  for the test model (\ref{eq:arm_general}).
\end{theorem}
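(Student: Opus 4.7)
The plan is to reduce the claim to a direct application of Theorem \ref{thm:univariatecase} by recognizing that the cubic polynomial $R(b,h)$ defined in (\ref{eq:rdib}) is, up to a positive multiplicative factor, exactly the quantity $\mathrm{e}_h(b)$ whose vanishing characterizes energy-preserving linear maps of the form (\ref{eq:tildeM}).

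First, I would recall the explicit expression (\ref{eq:econh}) of the off-diagonal coefficient produced when the one-parameter splitting scheme $\Psi_h^{(b)}$ of (\ref{composition_2old}) is applied to the univariate test problem (\ref{eq:arm_general}):
\begin{equation*}
\mathrm{e}_h(b) \;=\; \frac{h_\sigma^{3}\,\sigma}{4(\sigma^{2}+1)}\Bigl(2\,b^{3}h_\sigma^{2}-b^{2}h_\sigma^{2}-4b^{2}+6b-1\Bigr),\qquad h_\sigma=\frac{h}{\alpha\beta}.
\end{equation*}
Since $h_\sigma^{2}=(h/(\alpha\beta))^{2}$, the cubic polynomial in the parenthesis coincides with
\begin{equation*}
R(b,h)=2\Bigl(\tfrac{h}{\alpha\beta}\Bigr)^{2}b^{3}-\Bigl(4+\bigl(\tfrac{h}{\alpha\beta}\bigr)^{2}\Bigr)b^{2}+6b-1,
\end{equation*}
so that $\mathrm{e}_h(b) = \dfrac{h_\sigma^{3}\sigma}{4(\sigma^{2}+1)}\,R(b,h)$. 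For the given $h>0$, the prefactor $\dfrac{h_\sigma^{3}\sigma}{4(\sigma^{2}+1)}$ is strictly positive, hence vanishing of $\mathrm{e}_h(b)$ is equivalent to vanishing of $R(b,h)$.

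Next, if $b_h$ is any real root of the cubic $b\mapsto R(b,h)$, then by the identity above $\mathrm{e}_{h}(b_h)=0$. At this point I would invoke Theorem \ref{thm:univariatecase}: the map $\mathcal{M}_2^{(h,\sigma)}$ associated with $\Psi_h^{(b_h)}$ is symplectic and momentum-flip reversible (these properties hold for every splitting of the form (\ref{composition_2old}), as noted in the paper), and the condition $\mathrm{e}_h=0$ is precisely what that theorem requires to guarantee exact preservation of the Hamiltonian $H$.

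The argument is therefore almost tautological once one pairs the algebraic expression (\ref{eq:econh}) with the definition (\ref{eq:rdib}). The only step requiring a little care is the algebraic identification of $R(b,h)$ with the cubic factor in $\mathrm{e}_h(b)$; no genuine obstacle arises, because the existence of at least one real root of a real cubic is automatic, so the statement ``consider $b_h$ a real root'' is always meaningful. No stability or ergodicity consideration enters the proof of energy preservation itself, so the conclusion follows directly.
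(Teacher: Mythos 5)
Your proposal is correct and follows essentially the same route as the paper's own proof: factor $\mathrm{e}_h(b)$ as the positive prefactor $\frac{\sigma}{4(\sigma^2+1)}\left(\frac{h}{\alpha\beta}\right)^3$ times $R(b,h)$, conclude $\mathrm{e}_h(b_h)=0$ from $R(b_h,h)=0$, and invoke Theorem \ref{thm:univariatecase}. The additional remarks on the existence of a real root and on symplecticity/reversibility are harmless elaborations of the same argument.
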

\begin{proof}
Write  $\mathrm{e}_{h}(b)$   in (\ref{eq:econh}) as  $\mathrm{e}_{ h}(b) \,=\, \dfrac{ \sigma}{4(\sigma^2+1)} \,\left(\dfrac{h}{\alpha\, \beta}\right)^3\, R(b, h)$; then, from $R(b_h, h)=0$ it follows   $\mathrm{e}_{ h}(b_h)\, =\, 0$. From Theorem  \ref{thm:univariatecase}, the result follows.
\end{proof}

\bigskip

\noindent In the HMC framework it can be more useful to adopt a different  perspective:

\begin{theorem}\label{Th2}
Let $\dfrac{3-\sqrt{5}}{4}\, < \, b\, \leq \dfrac{1}{4}$ and consider  
 \begin{equation}\label{hr}
 h_b\, :=\,  \sqrt{\dfrac{4\, b^2\,-\, 6\, b\, +\, 1}{b^2\, (2\,b\, -\, 1)}}.
\end{equation}
Then the scheme in (\ref{composition_2old}) given by  $\mathbf{Y}_{n+1}=\Psi_{h}^{(b)}(\mathbf{Y}_n)$ with  $h:={\alpha\, \beta\,  h_b}$  provides a stable energy-preserving approximation of the test model (\ref{eq:arm_general}).
\end{theorem}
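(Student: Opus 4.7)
The plan is to treat this as the dual form of Theorem \ref{Th1}: rather than fixing $h$ and hunting for a root $b_h$ of the cubic $R(b,h)$ in the variable $b$, I fix $b$ and solve for $h$. Writing $h_\sigma = h/(\alpha\beta)$, the equation $R(b,h)=0$ in \eqref{eq:rdib} is quadratic in $h_\sigma^2$; in fact it is linear in $h_\sigma^2$, since
\[
R(b,h)\;=\;h_\sigma^{2}\,(2b^{3}-b^{2})\;-\;(4b^{2}-6b+1),
\]
so the unique solution is $h_\sigma^{2} = (4b^{2}-6b+1)/(b^{2}(2b-1))$. This already matches the definition of $h_b$ in \eqref{hr}, so the bulk of the work is to show that $h_b$ is a well-defined positive real number on the stated interval and that the corresponding $h = \alpha\beta\,h_b$ lies inside the stability interval \eqref{eq:stability}.

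First I would check positivity of $h_b^{2}$. For $0<b<1/2$ the denominator $b^{2}(2b-1)$ is negative, so $h_b^{2}>0$ exactly when the numerator $4b^{2}-6b+1$ is negative. The roots of $4b^{2}-6b+1$ are $(3\pm\sqrt{5})/4$, so $h_b^{2}>0$ precisely for $b\in((3-\sqrt{5})/4,(3+\sqrt{5})/4)$; intersecting with $b\le 1/4<1/2$ gives exactly the range in the statement. Having produced a real positive $h_b$ whose associated $h_\sigma$ is a root of $R(\cdot,h)$, Theorem \ref{Th1} immediately yields $\mathrm{e}_{h}(b)=0$ and hence, via Theorem \ref{thm:univariatecase}, energy preservation.

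The remaining — and really the only nontrivial — point is stability. I would verify that $h_\sigma=h_b$ satisfies both inequalities in \eqref{eq:stability}, namely $h_b^{2}\le 2/b$ and $h_b^{2}\le 2/(1/2-b)$. Substituting the explicit expression for $h_b^{2}$, multiplying through by $b^{2}(2b-1)<0$ (and flipping the inequality) and simplifying, I expect both inequalities to collapse to the single algebraic condition $8b^{2}-6b+1\ge 0$, equivalently $(4b-1)(2b-1)\ge 0$, which on $(0,1/2)$ is exactly $b\le 1/4$. So the range $(3-\sqrt{5})/4<b\le 1/4$ is tight for stability.

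The one subtlety, and what I anticipate as the main obstacle, is the boundary case $b=1/4$, where the computation gives $h_b^{2}=8=2/b=2/(1/2-b)$, i.e.\ $|\mathrm{p}_{h}|=1$ rather than $|\mathrm{p}_{h}|<1$ as \eqref{eq:stability} asks for. I would treat this separately: with $\mathrm{e}_h=0$ and the determinant identity \eqref{det1} reducing to $\mathrm{p}_h^{2}+\mathrm{q}_h^{2}=1$, the equality $\mathrm{p}_h=-1$ forces $\mathrm{q}_h=0$, so the propagator matrix \eqref{eq:tildeM} reduces to $-I_{2}$. This is manifestly a stable (in fact involutive) map, so the degenerate boundary is benign and the conclusion of the theorem still holds; alternatively, one can invoke energy preservation directly, since conservation of the positive-definite quadratic form $H(\mathbf{Y})=\tfrac12\mathbf{Y}^{T}\mathcal{D}_{2}^{-1}\mathbf{Y}$ under the linear map automatically bounds all iterates. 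Combining these pieces gives the stated stable, energy-preserving approximation.
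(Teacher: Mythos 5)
Your proposal is correct and follows essentially the same route as the paper: view $R(b,h)=0$ as an equation for $h_\sigma$ at fixed $b$, obtain $h_b$ from \eqref{hr}, conclude $\mathrm{e}_h(b)=0$ and hence energy preservation via Theorem \ref{thm:univariatecase}, and then verify that $h_b$ meets the stability bound \eqref{eq:stability}, which reduces to $b\le 1/4$. Your extra care at the endpoint $b=1/4$ — where $h_b^2=8$ gives $\mathrm{p}_h=-1$ and, with $\mathrm{e}_h=\mathrm{q}_h=0$, the propagator degenerates to $-I_2$ and is still stable — is a genuine refinement the paper's proof silently skips over, since \eqref{eq:stability} is stated as a strict inequality.
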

\begin{proof}
 Consider  $R(b,h)$ in (\ref{eq:rdib}) as a second order polynomial with respect to $ h_\sigma = \dfrac{h}{\alpha\, \beta}$ which admits the positive root $h_b$
given in (\ref{hr}) for  $\dfrac{3-\sqrt{5}}{4}\, < \, b\, < \dfrac{1}{2}$; as a consequence $$\mathrm{e}_{h}(b)\, =\, \dfrac{ \sigma }{4(\sigma^2+1)} \,\left(\dfrac{h}{\alpha\, \beta}\right)^3 R(b, h)=  \dfrac{ \sigma  h_b^3}{4(\sigma^2+1)} \, (2\,h_b^2\,b^3\,-\, (4+  h_b^2)\,b^2+6b-1)=0 $$ and, from Theorem  \ref{thm:univariatecase}, the conservation of energy follows. Moreover, under the hypothesis of $b$ bounded by $\dfrac{1}{4}$  from above, it results $$  h_b  \, \leq\, \sqrt{\dfrac{4}{1\,-\,2b}}\, = \min\left\{ \,\sqrt{\dfrac{2}{b}}, \, \sqrt{\dfrac{2}{1/2\,-\,b}}\,\right\}$$
so that $h= \alpha \, \beta \, h_b$ satisfies the stability condition
(\ref{eq:stability}).
\end{proof}

\noindent  An important consequence which will be useful  to extend the described result to the multivariate case, is the following. 
\begin{theorem}\label{corollary}
With the notations used above,  the scheme  $\mathbf{Y}_{n+1}=\tilde \Psi_{  h_b}^{(b)}(\mathbf{Y}_n)$  with \begin{equation}\label{composition_2tilde}
 \tilde \Psi_{ h_b}^{(b)}:=\tilde \varphi^{[Q]}_{b  h_b} \circ
\tilde  \varphi^{[P]}_{ h_b/2} \circ 
\tilde  \varphi^{[Q]}_{(1-2\,b)   h_b} \circ
\tilde  \varphi^{[P]}_{ h_b/2} \circ \tilde \varphi^{[Q]}_{b   h_b}
\end{equation}
where 
 $\tilde \varphi^{[P]}_t$ and  $\tilde \varphi^{[Q]}_t$ represent the exact flows of the dynamics $\dfrac{d\mathbf{Y}}{dt} =[\sigma^{-1} P,\,\mathbf{0}]^T$ and $\dfrac{d\mathbf{Y}}{dt}=[\mathbf{0},\,-\sigma \,Q]^T$, respectively,  provides a stable energy-preserving approximation of the test model (\ref{eq:arm_general}). 

\end{theorem}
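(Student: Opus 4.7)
The plan is to observe that, after a uniform rescaling of the time parameter in each elementary semiflow by the factor $\alpha\beta$, the map $\tilde\Psi_{h_b}^{(b)}$ is literally identical to the map $\Psi_{\alpha\beta\,h_b}^{(b)}$ already analyzed in Theorem~\ref{Th2}. Once this identification is in hand, the conclusion follows by direct invocation of that theorem.

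First I would write the four relevant exact flows in closed form. The flow $\varphi^{[Q]}_t$ of $f^{[Q]}=[0,-Q/\alpha^2]^T$ sends $(Q_0,P_0)$ to $(Q_0,\,P_0-tQ_0/\alpha^2)$, while $\tilde\varphi^{[Q]}_s$ of $[0,-\sigma Q]^T$ with $\sigma=\beta/\alpha$ sends it to $(Q_0,\,P_0-s\beta Q_0/\alpha)$. Matching the two shifts of $P$ yields the identity $\varphi^{[Q]}_{\alpha\beta\,s}=\tilde\varphi^{[Q]}_s$ for every $s\ge 0$. The analogous computation with $\varphi^{[P]}_t:(Q_0,P_0)\mapsto(Q_0+tP_0/\beta^2,\,P_0)$ and $\tilde\varphi^{[P]}_s:(Q_0,P_0)\mapsto(Q_0+s\alpha P_0/\beta,\,P_0)$ gives the companion identity $\varphi^{[P]}_{\alpha\beta\,s}=\tilde\varphi^{[P]}_s$.

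Next I would substitute these two identities into the symmetric five-factor composition (\ref{composition_2tilde}), whose sub-step lengths are $bh_b,\,h_b/2,\,(1-2b)h_b,\,h_b/2,\,bh_b$. Each $\tilde\varphi$-factor at internal time $s$ is rewritten as the corresponding $\varphi$-factor at internal time $\alpha\beta\,s$, and the outcome is that the full composition equals $\Psi_h^{(b)}$ from (\ref{composition_2old}) evaluated at $h=\alpha\beta\,h_b$. This is precisely the step size singled out in Theorem~\ref{Th2}, which guarantees stability and exact preservation of the Hamiltonian of (\ref{eq:arm_general}); the same conclusion therefore transfers to $\tilde\Psi_{h_b}^{(b)}$.

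I do not anticipate any real obstacle here: the argument is essentially a bookkeeping check that a single global time rescaling $t\mapsto\alpha\beta\,t$ applies uniformly to both $\varphi^{[P]}$ and $\varphi^{[Q]}$, so the symmetric composition is preserved factor by factor. The point of recording the corollary in this scale-free form is that it exposes the structure that will survive the passage to the multivariate system (\ref{arm_osc_multgen}), where each coordinate carries its own pair $(\alpha_j,\beta_j)$ and no uniform global rescaling of the physical step size is available.
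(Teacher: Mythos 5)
Your proposal is correct and is essentially the paper's own argument: the paper's proof consists precisely of the observation that $\tilde\Psi_{h_b}^{(b)}$ coincides with $\Psi_{h}^{(b)}$ for $h=\alpha\beta\,h_b$, after which Theorem~\ref{Th2} applies. You merely spell out the flow-by-flow time-rescaling identities $\varphi^{[Q]}_{\alpha\beta s}=\tilde\varphi^{[Q]}_{s}$ and $\varphi^{[P]}_{\alpha\beta s}=\tilde\varphi^{[P]}_{s}$ that the paper leaves implicit.
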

\begin{proof}
It is enough to observe that the scheme $\mathbf{Y}_{n+1}=\tilde \Psi_{  h_b}^{(b)}(\mathbf{Y}_n)$
is equivalent to the scheme (\ref{composition_2old}) given by  $\mathbf{Y}_{n+1}=\Psi_{h}^{(b)}(\mathbf{Y}_n)$ with $h\, = \alpha\, \beta\,   h_b$.
\end{proof}

\subsection{Generalization to multivariate Gaussian distributions}\label{sec:4.3}

\noindent In Theorem \ref{corollary} it was shown how to  build a symplectic, reversible, energy-preserving scheme for the $j$th   oscillator (\ref{eq:arm_generalmulti}), for $j=1,\dots,d$. Setting $\mathbf{Y}^{(j)}:=[Q_j,\, P_j],$ consider the scheme  $\mathbf{Y}^{(j)}_{n+1}=\tilde \Psi_{  h_b}^{(b)}(\mathbf{Y}^{(j)}_n)$ with \begin{equation}\label{composition_2jesima}
 \tilde \Psi_{  h_b}^{(b)}:=\tilde \varphi^{[Q_j]}_{b  h_b} \circ
\tilde  \varphi^{[P_j]}_{  h_b/2} \circ 
\tilde  \varphi^{[Q_j]}_{(1-2\,b)  h_b} \circ
\tilde  \varphi^{[P_j]}_{  h_b/2} \circ \tilde \varphi^{[Q_j]}_{b  h_b}
\end{equation}
where 
 $\tilde \varphi^{[P_j]}_t$ and  $\tilde \varphi^{[Q_j]}_t$ represent the exact flows of  $$\dfrac{d\mathbf{Y}^{(j)}}{dt} \,=\,[\sigma_j^{-1} \, P_j,\,0]^T, \qquad  \dfrac{d\mathbf{Y}^{(j)}}{dt}\,=\,[0,\,-\sigma_j \,  Q_j]^T, \quad \sigma_j\, =\, \dfrac{\beta_j}{\alpha_j},$$ for $j=1,\dots,d$. It is a symplectic, reversible, stable scheme for   the $j$  oscillator (\ref{eq:arm_generalmulti}),  which preserves the $j$-th Hamiltonian  $H_j(Q_j,P_j)\,=\, \dfrac{1}{2} \left(\dfrac{Q_j^2}{\alpha_j^2}\, +\, \dfrac{P_j^2}{\beta_j^2}\right)  =\, \dfrac{1}{2\, \alpha_j\, \beta_j} \left(\sigma_j\, Q_j^2\,+\,  \dfrac{P_j^2}{\sigma_j}\right).$

\noindent Now we are searching for  symplectic, reversible, energy-preserving schemes for the  $d$-dimensional test model (\ref{arm_osc_multgen}). With the same notations adopted in Section (\ref{sec:3.2}), i.e. $D_{\alpha}$ and $D_{\beta}$ are $d\, \times d$ diagonal matrices with entries $\alpha_j^2$ and $\beta_j^2$, respectively, for $j=1,\dots,\, d$ 
 and  $\Sigma\, = \, D^{1/2}_{\beta}\, D^{-1/2}_{\alpha}$, we can give the following result


\begin{theorem}\label{th:multi}
For $\dfrac{3-\sqrt{5}}{4}\, < \, b\, \leq \dfrac{1}{4}$ the method $\mathbf{Y}_{n+1}\, =\,\tilde \Psi_{ h_b}^{(b)}\, \mathbf{Y}_{n}$ with  $ h_b$ defined in (\ref{hr}) and, with abuse of notations,   \begin{equation}\label{composition_3}
 \tilde  \Psi_{  h_b}^{(b)}:=\tilde \varphi^{[\mathbf{Q}]}_{b \, h_b} \circ
\tilde  \varphi^{[\mathbf{P}]}_{ h_b/2} \circ 
\tilde  \varphi^{[\mathbf{Q}]}_{(1-2\,b)  h_b} \circ
\tilde  \varphi^{[\mathbf{P}]}_{ h_b/2} \circ \tilde \varphi^{[\mathbf{Q}]}_{b \,  h_b}, 
\end{equation}
where
 $\tilde \varphi^{[\mathbf{P}]}_t$ and  $\tilde \varphi^{[\mathbf{Q}]}_t$ represent the exact flows of  $$\dfrac{d\mathbf{Y}}{dt} \,=\,[\Sigma^{-1} \, \mathbf{P},\,\mathbf{0}_d]^T, \qquad \dfrac{d\mathbf{Y}}{dt}\,=\,[\mathbf{0}_d,\,-\Sigma \,  \mathbf{Q}]^T,$$  provides a  symplectic, reversible, stable  approximation for the system (\ref{arm_osc_multgen}),  which preserves the Hamiltonian (\ref{eq:Hmulti}).
\end{theorem}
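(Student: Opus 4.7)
The plan is to reduce the multivariate statement to $d$ independent copies of Theorem \ref{corollary} by exploiting the fact that all three matrices $D_\alpha$, $D_\beta$, and $\Sigma = D_\beta^{1/2} D_\alpha^{-1/2}$ are diagonal. First I would verify that the two vector fields generating $\tilde\varphi^{[\mathbf{P}]}_t$ and $\tilde\varphi^{[\mathbf{Q}]}_t$, namely $[\Sigma^{-1}\mathbf{P},\mathbf{0}_d]^T$ and $[\mathbf{0}_d,-\Sigma\mathbf{Q}]^T$, decouple component-wise: the $j$-th component evolves only according to the pair $(Q_j,P_j)$ and the scalar $\sigma_j = \beta_j/\alpha_j$. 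Hence the flows themselves are block-diagonal, acting as $\tilde\varphi^{[P_j]}_t$ and $\tilde\varphi^{[Q_j]}_t$ on each pair $\mathbf{Y}^{(j)}=[Q_j,P_j]^T$, and the same therefore holds for the full symmetric composition $\tilde\Psi_{h_b}^{(b)}$ in (\ref{composition_3}): it is precisely the Cartesian product of the univariate schemes (\ref{composition_2jesima}).

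Once the decoupling is established, I would apply Theorem \ref{corollary} in each of the $d$ component subsystems. Crucially, the value $h_b$ in (\ref{hr}) depends only on $b$, not on the oscillator-specific parameters $\alpha_j,\beta_j$, because the rescaling by $\Sigma$ has already absorbed the factor $\alpha_j\beta_j$ that appeared in the equivalence $h=\alpha\beta h_b$ used in the proof of Theorem \ref{corollary}. Therefore the same $h_b$ simultaneously furnishes an energy-preserving step for every $H_j(Q_j,P_j)=\tfrac{1}{2}(Q_j^2/\alpha_j^2+P_j^2/\beta_j^2)$. Summing over $j$ and using that $\mathcal{D}_{2d}$ is diagonal gives $H(\mathbf{Y})=\sum_{j=1}^{d} H_j(Q_j,P_j)$, so preservation of each $H_j$ by the $j$-th factor map yields preservation of the full Hamiltonian (\ref{eq:Hmulti}) by $\tilde\Psi_{h_b}^{(b)}$.

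For the remaining structural properties, I would note that symplecticity, volume-preservation, and momentum-flip reversibility are preserved under taking direct sums of maps acting on disjoint canonical pairs; since each univariate factor enjoys these properties by Theorem \ref{corollary}, so does the product. Stability follows uniformly across components because the bound on $b$ assumed in Theorem \ref{Th2}, namely $\tfrac{3-\sqrt{5}}{4}<b\leq\tfrac{1}{4}$, guarantees $h_b\le\sqrt{4/(1-2b)}$, a constraint intrinsic to the rescaled formulation and independent of the individual $\sigma_j$.

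I do not anticipate any substantive obstacle: the entire argument is an orchestration of the one-dimensional result, and the only point that deserves emphasis is the observation that the dimensionless step $h_b$ from (\ref{hr}) is oscillator-independent, which is exactly what makes the simultaneous application to all $d$ components possible. Writing the proof thus amounts to stating the decoupling, invoking Theorem \ref{corollary} once per component, and summing the per-component Hamiltonians.
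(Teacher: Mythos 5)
Your proposal is correct and follows essentially the same route as the paper: the paper also exploits the diagonal structure to reduce everything to $d$ copies of the univariate rescaled scheme, the only presentational difference being that it writes out the block matrix $\mathcal{M}_{2d}^{(h_b,\Sigma)}$ explicitly and checks that each diagonal entry $\mathrm{E}_{h_b}(j,j)$ carries the common factor $2b^3h_b^2-b^2h_b^2-4b^2+6b-1=0$, whereas you invoke Theorem \ref{corollary} once per component. Your emphasis on the fact that $h_b$ is oscillator-independent is precisely the point that makes the paper's entrywise computation go through.
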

\begin{proof}

The method (\ref{composition_3})    can be expressed  as $\mathbf{Y}_{n+1}\, =\, \mathcal{M}_{2d}^{(  h_b, \Sigma)}\, \mathbf{Y}_{n}$ where $\mathcal{M}_{2d}^{(  h, \Sigma)}$ is given in (\ref{eq:tildeMmulti})
and 
$$
\begin{array}{l}
  \mathrm{P}_{ h_b}\, =\, \left(1\,-\, \dfrac{  h_b^2}{2}\,+\, \dfrac{  h_b^4}{4}\, b\,(1\,-\,2\,b)\right) \,   \mathrm{I}_{d}, \\\\
  \mathrm{Q}_{  h_b}(j,j) = \dfrac{\sigma_j^2\,b^2(1-2\,b)}{4\,\left(\sigma_j ^2+1\right)}\,{{  h_b }}^5+\dfrac{4\,\sigma_j^2 b^2+2\,b\,-4\,b\, \sigma_j ^2 -1}{4\left(\sigma_j ^2+1\right)}\,{{  h_b }}^3\,+\,{  h_b}, \\\\
   \mathrm{E}_{ h_b}(j,j)   =\,\dfrac{  h_b^3 \sigma_j}{4(\sigma_j^2+1)} \left(2\,b^3\,{{  h_b }}^2-b^2\,{{ h_b}}^2-4\,b^2+6\,b-1\right)\, =\, 0,
\end{array}
$$
and $  \mathrm{Q}_{  h_b}\,(i,j)\, =\, \mathrm{E}_{  h_b}(i,j)\,=\, 0\,$  for $i\neq j, \, i,\,j=1,\dots,d.$ As $\mathrm{E}_{ h_b}= \mathbf{0}_{d \times d}$  then, from  (\ref{eq:matcalEsigmahb}),  
${\mathcal{E}_{ 2d}^{(h_b)}}\, =\, \mathbf{0}_{2d \times 2d }$ and the energy error in (\ref{eq:energy_error_2d})  nullifies i.e.  $\Delta_{2d}^{(n, h)}=0$.
\end{proof}

\section{Step size selection    for sampling from generic distributions}\label{sec:5}
In this section we propose a criterium for select the step size $h$ within HMC processes  for sampling from generic distributions by means of the splitting method (\ref{composition_2old}). It relies on the following preliminar  result 
\begin{theorem}\label{th:multi_generic_new}
 For $\dfrac{3-\sqrt{5}}{4}\, < \, b\, \leq \dfrac{1}{4}$, the method (\ref{composition_2old}) with  $h= h_b$ defined in (\ref{hr})   provides a  symplectic, reversible, stable  approximation for the system (\ref{eq:HS}),  which preserves the Hamiltonian (\ref{eq:Hami_general}) whenever $U(\mathbf{Q)}=\dfrac{1}{2} \, \mathbf{Q}^T \, {D}_{\beta}^{-1}\, \mathbf{Q}$.
\end{theorem}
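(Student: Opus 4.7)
The plan is to reduce Theorem \ref{th:multi_generic_new} to the already-established Theorem \ref{th:multi}. First, I would rewrite the hypothesis on $U$ in vector-field form: since $\nabla_\mathbf{Q} U(\mathbf{Q}) = D_\beta^{-1}\mathbf{Q}$, the HMC system (\ref{eq:HS}) becomes
$$
\dfrac{d\mathbf{Q}}{dt} = D_\beta^{-1}\mathbf{P}, \qquad \dfrac{d\mathbf{P}}{dt} = -D_\beta^{-1}\mathbf{Q},
$$
which is exactly the multivariate linear test model (\ref{arm_osc_multgen}) in the special case $D_\alpha = D_\beta$. In particular $\Sigma = D_\beta^{1/2}D_\alpha^{-1/2} = I_d$, and the Hamiltonian (\ref{eq:Hami_general}) coincides with the quadratic Hamiltonian (\ref{eq:Hmulti}) analysed in Section \ref{sec:3.2}.

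Next, for $\Sigma = I_d$ the ``tilted'' partial flows $\tilde\varphi^{[\mathbf{P}]}$ and $\tilde\varphi^{[\mathbf{Q}]}$ appearing in (\ref{composition_3}) collapse -- after the componentwise time rescaling that Theorem \ref{corollary} already exploits in the univariate case -- onto the standard partial flows $\varphi^{[\mathbf{P}]}$ and $\varphi^{[\mathbf{Q}]}$ used to build $\Psi_h^{(b)}$ in (\ref{composition_2old}) for this specific $U$. Hence $\Psi_{h_b}^{(b)}$ of (\ref{composition_2old}) coincides with $\tilde\Psi_{h_b}^{(b)}$ of (\ref{composition_3}), and Theorem \ref{th:multi} immediately supplies the three properties claimed: symplecticity and momentum-flip reversibility (from the symmetric composition of exact Hamiltonian semi-flows), stability (from the bound $b \le 1/4$ via (\ref{eq:stability})), and exact energy preservation $\Delta_{2d}^{(n,h_b)} = 0$.

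A more hands-on alternative, which I would use to make the step-size choice transparent, is a componentwise argument through Theorem \ref{Th2}. The system decouples into $d$ copies of the univariate oscillator (\ref{eq:arm_general}) with $\alpha_j = \beta_j$, and for each one the root condition $R(b,h_b) = 0$ in (\ref{eq:rdib}) is satisfied uniformly in $j$; therefore $\mathrm{e}_{h_b}(b) = 0$ in every block, giving $\mathrm{E}_{h_b} = \mathbf{0}_{d\times d}$ in (\ref{eq:tildeMmulti}) and, via (\ref{eq:matcalEsigmahb})--(\ref{eq:energy_error_2d}), $\Delta_{2d}^{(n,h_b)} = 0$. The main obstacle, in either route, is the identification of the standard and tilted partial flows at step $h_b$ under the hypothesis $\Sigma = I_d$: once that is in hand the theorem is essentially a one-line corollary of Theorem \ref{th:multi}.
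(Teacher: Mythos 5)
Your first route is exactly the paper's proof: for this quadratic $U$ one has $D_\alpha\equiv D_\beta$, hence $\Sigma=I_d$, and (\ref{composition_2old}) reduces to (\ref{composition_3}) so that Theorem \ref{th:multi} applies; the componentwise alternative via Theorem \ref{Th2} is just the same reduction unrolled. You are right to flag the identification of $\varphi^{[\mathbf{P}]},\varphi^{[\mathbf{Q}]}$ with $\tilde\varphi^{[\mathbf{P}]},\tilde\varphi^{[\mathbf{Q}]}$ at step $h_b$ as the only delicate point --- the componentwise rescaling is by $\alpha_j\beta_j=\beta_j^2$, so the two compositions literally coincide only when $D_\beta=I_d$, which is precisely the normalization the paper adopts in the remark immediately following the theorem.
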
 
\begin{proof}
It is enough to notice that, for $U(\mathbf{Q)}=\dfrac{1}{2} \, \mathbf{Q}^T \, {D}_{\beta}^{-1}\, \mathbf{Q}$,  $D_{\alpha}\equiv D_{\beta}$ and  $\Sigma\, = \, \, D^{1/2}_{\beta}\, D^{-1/2}_{\beta} = I_d$; then the algorithm (\ref{composition_2old}) reduces to (\ref{composition_3}).
\end{proof}
\bigskip It is worth observing that $\Sigma\,=\, I_d$ also when  $D_{\alpha}= D_{\beta}=I_d$. This means that, whenever  $U(\mathbf{Q)}=\dfrac{1}{2} \, \mathbf{Q}^T \,  \mathbf{Q}$ we will associate as kinetic the function $U(\mathbf{P)}=\dfrac{1}{2} \, \mathbf{P}^T \,  \mathbf{P}$.

\bigskip
\noindent Hence, for sampling from generic distributions within a HMC processes, we propose to  replace the classical St\"ormer-Verlet algorithm in (\ref{eq:stormer-verlet}) with the second order splitting method defined 
in (\ref{composition_2old})  and to adopt the step size selection $h=h_b$ defined in (\ref{hr}). The rationale is  that, differently from the St\"ormer-Verlet method,   for both univariate and multivariate Gaussian test problems, the one-parameter map (\ref{composition_2old})  can advance with a  suitable  step size which nullifies the energy error allowing  all proposals to be accepted as in the {theoretical} HMC \cref{alg:buildtree}. 

\subsection{Adaptive selection of the $b$ parameter}\label{b_selection}
Each value of the  $b$ parameter in the interval $]b_{min},\, b_{max}]:=\left]\frac{3-\sqrt{5}}{4},\,\frac{1}{4}\right]$ detects a specific method in the class of the splitting methods (\ref{composition_2old}). Hence, we may wonder about what is the 'best' choice and, consequently, the 'best method' to adopt.  Classical criteria might be followed:

\bigskip
\begin{enumerate}

\item choose $b=b_{max}=0.25$ to enlarge $h_b$ as much as  possible. Consequently, $h_{b_{max}}\approx 2.828$. This is a very large step which can  be used, in practice, only for Gaussian distributions and for very low-dimensional non-stiff problems;\\

\item set $b$ at the value $b_{BCS}=\dfrac{3-\sqrt{3}}{6}$ indicated as optimal in \cite{blanes2014numerical}. The resulting step is  $ h_{b_{BCS}} \approx 1.8612$;\\

\item enlarge $h_b$ as much as  possible increasing $b$ but taking into account that stability decreases when we approach the roots of   $\mathrm{p}_{h_b}^2 \,-\, 1\, =\, 0$. The best choice corresponds to $b$ such that  $\mathrm{p}_{ h_b}^2\, =\,0$.  We find $b = b_{stab}\approx 0.2008$ and $  h_{b_{stab}}\, \approx 1.3432$;\\

\item  choose   $b$ in order to minimize  the leading error term $ k^2_{3,1}\, + k^2_{3,2}$ with 
$
k_{3,1}\,= \dfrac{12\, b^2 -12 \, b\,+2}{24}$ and  $k_{3,2}\,= \dfrac{-6\,b\,+\,1}{24}$. In this case the optimal choice corresponds to $b=b_{ML}\approx 0.1932$ (see  \cite{mclachlan1995numerical}) and the resulting step is $ h_{b_{ML}} \approx 0.6549$;

\end{enumerate}

\bigskip 
\noindent In our implementations, the choice of the parameter $b$ is initially finalized to make fair comparisons with existing schemes. This means that we set $b$ to the values which correspond to  step size $h=h_b$ varying in the same numerical range considered in benchmark tests.

\noindent However, a  promising strategy we are going to propose is  an 'adaptive' choice of the method. Starting from one of the classical choices of $b=b_{init}$  as described above, we decrease this value of a fixed percentage each time a sample is not accepted. Since the allowed maximum value of $b=b_{max}$ provides the maximum step size $h_{b_{max}}\approx 2.828$, then $T^*$ is chosen larger that $3$ in order to have $N\geq 1$. The HMC algorithm with the  initial $b=b_{max}$ selection with $75 \% $ of reduction is described in \cref{alg:buildtreenovel}.   In our simulations, we will also test the performance of the proposed integrator built on the proposed adaptive  approach.

\begin{algorithm}
\caption{Novel HMC algorithm }
\label{alg:buildtreenovel}
\begin{algorithmic}
\STATE{Draw $\mathbf{q}^{(1)} \sim \pi(\mathbf{q})$,  $\mathbf{q}^{(1)}\in \mathbb{R}^d$, $L\geq 1$,  $b_{init}=b_{max}, red=75\%$, set  $i=0$}
\WHILE{$i < L$}
\STATE{i=i+1}
\STATE{Draw  $\mathbf{p}^{(i)} \sim \mathcal{N}(0,D_{\beta})$}
\STATE{Set $factor\,=\,b_{init}-b_{min}$}
\STATE{Set $(\mathbf{Q}_0, \,\mathbf{P}_0 ) =  (\mathbf{q}^{(i)},\, \mathbf{p}^{(i)})$}, set $j\,=\,0$
\WHILE{$j < 1$}
\STATE {Randomly  choose $T^*\geq 3$}.
\STATE {Set $b=b_{min}\,+\,factor$ and $h=\sqrt{\dfrac{4\, b^2\,-\, 6\, b\, +\, 1}{b^2\, (2\,b\, -\, 1)}}$ }
\STATE{Set $N\geq 1$ such that $T^*\, =\, N\, h$} 
\STATE{Evaluate $(\mathbf{Q}_{n+1}\, \mathbf{P}_{n+1})\, =\,  \Psi_{h}^{(b)}(\mathbf{Q}_{n},\, \mathbf{P}_{n}) $, \,for $n=0,\dots N-1$}
\STATE{if $\left(\mathbf{Q}_N,\, \mathbf{P}_N\right)\, \neq \, (\mathbf{Q}_0, \,\mathbf{P}_0 )$, j\,=\,1 }
\ENDWHILE
\STATE{Set $\left(\mathbf{q}^*,\, \mathbf{p}^*\right)\,=\, \left(\mathbf{Q}_N,\, \mathbf{P}_N\right)$ }
\STATE{Calculate 
$\alpha = \text{min}\left(1,\exp\left(H(\mathbf{q}^{(i)},\mathbf{p}^{(i)}) -H(\mathbf{q}^*,\mathbf{p}^*)\right)\right)$}
\STATE{Draw  $u \sim \mathcal{U}(0,1)$}
\STATE{Update: if $\alpha >u$ then $\mathbf{q}^{(i+1)} = \mathbf{q}^*$;\\ otherwise $\mathbf{q}^{(i+1)} = \mathbf{q}^{(i)}$, \, $factor\,=\,red \cdot\, factor$;}
\ENDWHILE
\RETURN  Markov chain $\mathbf{q}^{(1)}, \, \mathbf{q}^{(2)}, \dots, \, \mathbf{q}^{(L)}$
\end{algorithmic}
\end{algorithm}
\bigskip
\section{Numerical examples}\label{sec:6}
\subsection{Bivariate Gaussian distributions}\label{sec:6.1}
As first example, the  simple $d=2$ dimensional test  in \cite{neal2011mcmc} is proposed, in order to numerically show the energy-preserving property of the proposed splitting technique.  Consider sampling two position variables $\mathbf{X}\, =[X_1\, X_2]^T$ from a bivariate Gaussian distribution with  zero means, unit standard deviation  and covariance $0.95$. Two corresponding momentum variables $\mathbf{P}\, =[P_1,\, P_2]$ defined to have a Gaussian distribution with unitary covariance matrix, are introduced. We then define the Hamiltonian as  $$U(\mathbf{X})\, +\, K(\mathbf{P})\, =\, \dfrac{1}{2} \, \mathbf{X}^T \,  S_{95}^{-1}\, \mathbf{X} + \dfrac{1}{2} \, \mathbf{P}^T \, \mathbf{P}, \qquad S_{95}= \left(\begin{array}{cc}
    1 & 0.95 \\
    0.95 & 1
\end{array}\right).$$ In order to describe the problem with notations suitable for the application of the proposed procedure, we diagonalize the symmetric  matrix 
$
S_{95}\, =\, V^T\, D_{\alpha}\, V  \, 
$
with $V$ unitary matrix of eigenvectors. In doing so, the Hamiltonian can be written as 
$$U(\mathbf{Q})\, +\, K(\mathbf{P})\, =\, \dfrac{1}{2} \, \mathbf{Q}^T \,  D_{\alpha}^{-1}\, \mathbf{Q} + \dfrac{1}{2} \, \mathbf{P}^T \, \mathbf{P}, \qquad D_{\alpha}= \left(\begin{array}{cc}
    0.05 & 0 \\
    0 & 1.95
\end{array}\right)$$ 
with $\mathbf{Q}\, =\, V\, \mathbf{X}$.  To illustrate the basic functionality of the novel  proposed integration method (\ref{composition_3}) applied with $\Sigma = D_{\alpha}^{-1/2}$ (nSP$2$S method), we compare it with the St\"ormer-Verlet method (\ref{eq:stormer-verlet}) (SV-method), and with the two and three step methods presented in \cite{blanes2014numerical}, (hereafter denoted with SP$2$S and SP$3$S). We run the experiment by choosing a path length $T^*=5$ for all the integrators and with a step size $h=0.005$ for the three competitors and with $b=b_{stab}=0.2008$ and $h_{b_{stab}}=1.3432$ of our approach. 
Figure \ref{Esempio2} shows the acceptance rate (AR) of the proposals for all of the methods considered; as  theoretically predicted, despite the very large step size,  nSP$2$S maintains the maximum acceptance rate  $AR=1$. 
\begin{figure}[h]\label{Esempio2}
\centering
  \subfigure[AR=0.6610]{\includegraphics[width=0.45\linewidth]{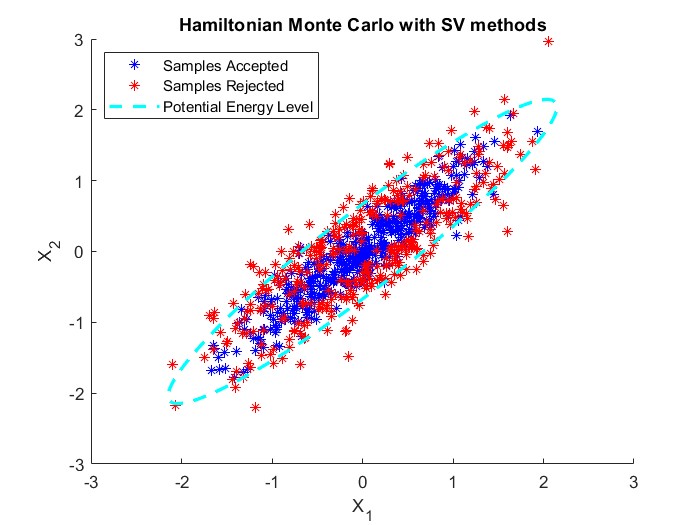}}
  \subfigure[AR=0.842]{\includegraphics[width=0.45\linewidth]{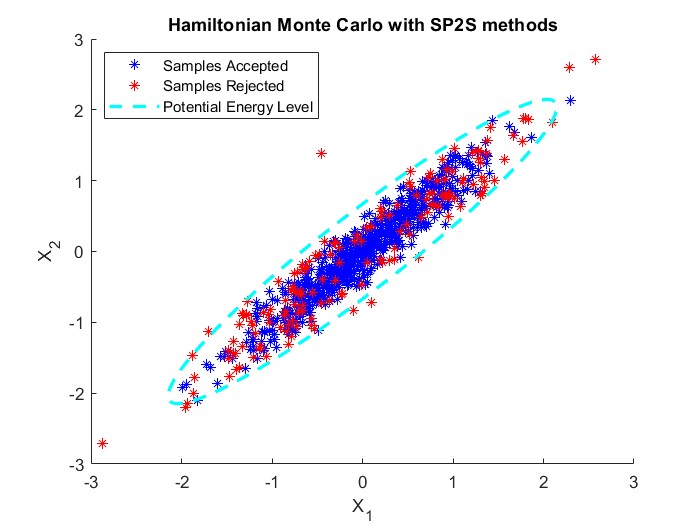}}
 \subfigure[AR=0.882]{\includegraphics[width=0.45\linewidth]{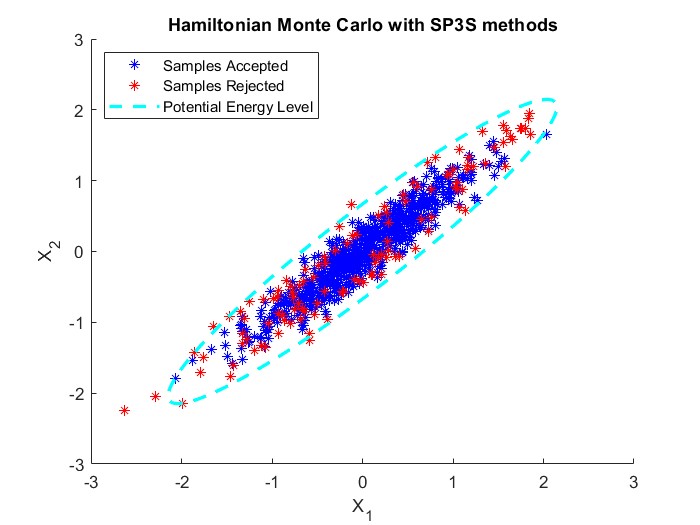}}
  \subfigure[AR=1]{\includegraphics[width=0.45\linewidth]{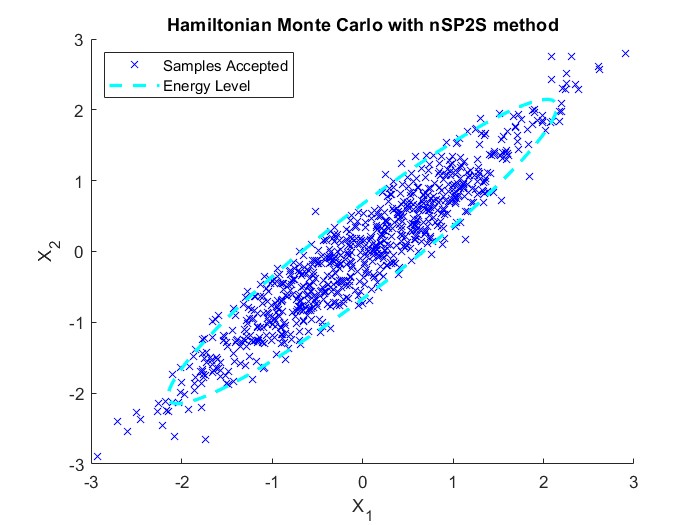}}
 \caption{Accepted and rejected samples of different integrator, for a Bivariate distribution. Iterations of HMC samples with $T^*=5$ and $h=0.005$ for (a), (b), (c), and $h\, = h_{b_{max}}\, \approx 1.3432$ for (d). 
 }
 \end{figure}

\subsection{Multivariate Gaussian distribution}\label{sec:6.2}
Regarding a more general case, we have considered, as a target a multivariate Gaussian distribution as considered in \cite{neal2011mcmc} in the same form adopted in \cite{blanes2014numerical} i.e. 
\[
 \pi(\mathbf{q})\propto \Bigg(-\frac{1}{2}\sum_{j=1}^{d}j^2q^2_{j}\Bigg).
\]
We consider different dimensions, from $d=256$ to $d=1024$ 
with potential energy function  $U(\mathbf{Q})=\dfrac{1}{2} \, \mathbf{Q}^T \,  D_{\alpha}^{-1}\, \mathbf{Q}$ in which the variables are independent, with  zero mean and $D_{\alpha}$ entries given by  standard deviations $\alpha_j\, = {1/j^2}$ for $j=1,\dots, d$.  Kinetic energy function $K(\mathbf{P})\,=\, \dfrac{1}{2} \, \mathbf{P}^T \, \mathbf{P}$ is set  as above.

\noindent For the experiments we compared our  algorithm (\ref{composition_3}) applied with $\Sigma = D_{\alpha}^{-1/2}$ with the family of integrators considered in \cite{calvo2019hmc}. This kind of integrators are \emph{second order accurate} and depend on a real parameter. The authors consider different integrators by varying the value of the parameter (see \cite{calvo2019hmc} for the details of integrators). For our purpose we chose as competitors, only those that in the paper are called LF, and BlCaSa. The first because it corresponds to three consecutive time-steps of the classic St\"ormer-Verlet (or equivalently, LeapFrog) method (\ref{eq:stormer-verlet}), the second because it is the most performing one when considering the case of multivariate Gaussian \cite{calvo2019hmc}. We do not consider here further improvements of the methods as provided in \cite{blanes2021symmetrically}, as, likewise the previous methods,  they do not retain the energy of Gaussian distributions.

\noindent To perform the experiments we used the same parameters used in the paper \cite{calvo2019hmc}. 
\begin{figure}[h]\label{multivariate_conf_acc}
  \centering
  \subfigure[LF]{\includegraphics[width=0.45\linewidth]{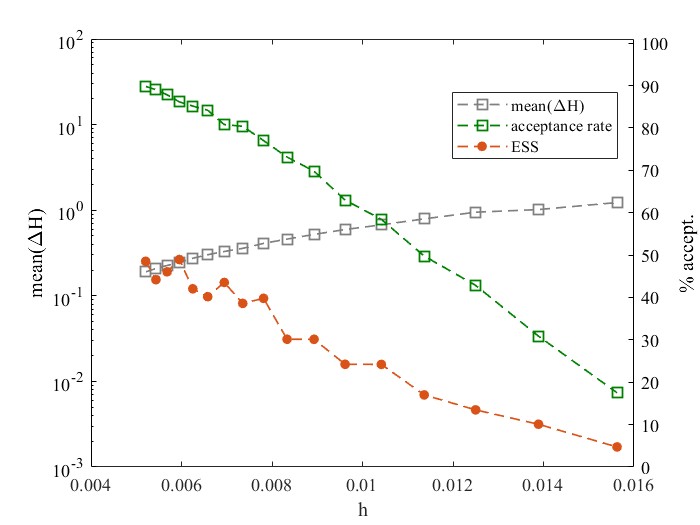}}
  \subfigure[BlCaSa]{\includegraphics[width=0.45\linewidth]{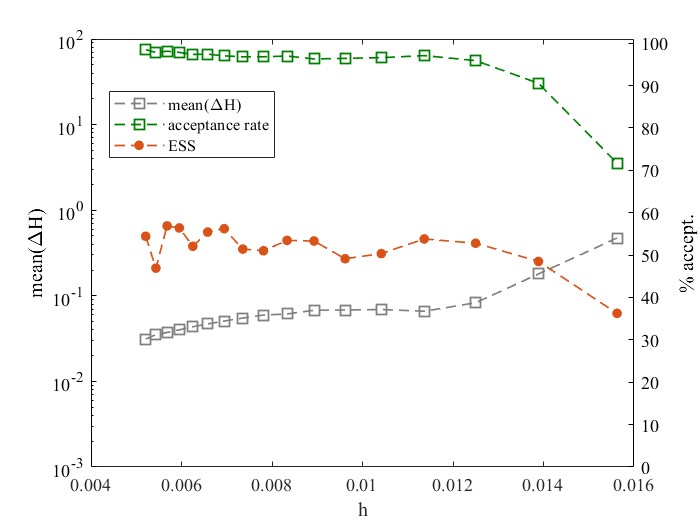}}\\
   \subfigure[nSP2S-1]{\centering\includegraphics[width=0.45\linewidth]{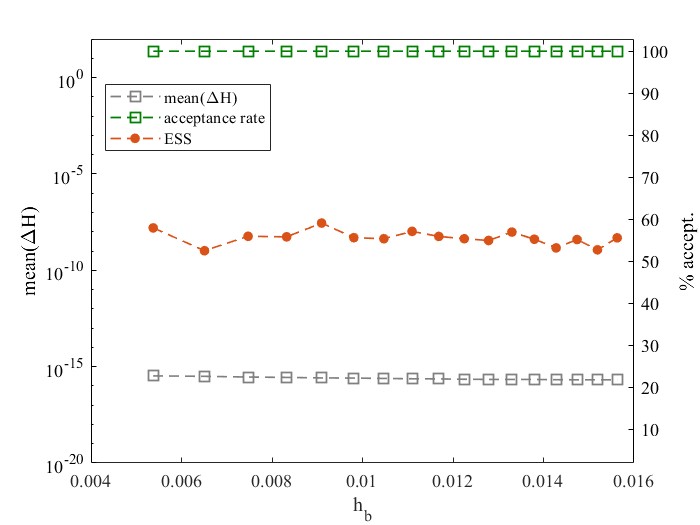}}
   \subfigure[nSP2S-2]{\centering\includegraphics[width=0.45\linewidth]{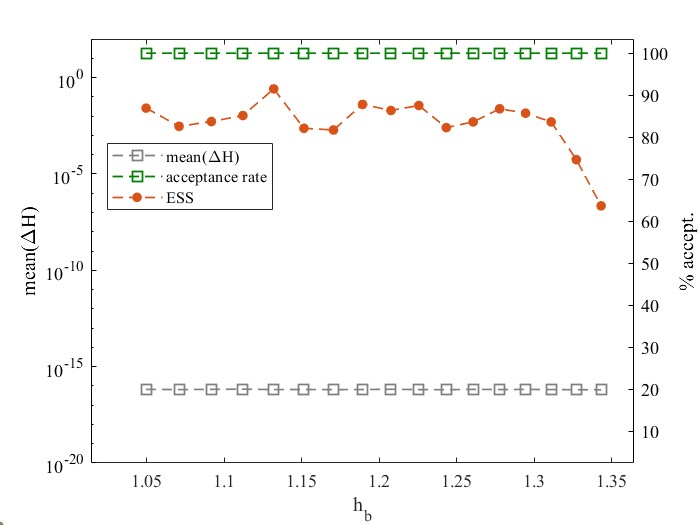}}
 \caption{Multivariate Gaussian target with d=256; acceptance rate percentage (green squares), mean of $\Delta H$ (grey squares) and ESS percentage (orange points) comparison for $LF$, $BlCaSa$ integrators, measuring with different step size $h$ and nSP$2$S-1 and nSP$2$S-2 by varying $h_b$.}
\end{figure}
In particular,  the number of samples, each of dimension $d=256$, has been set to $L=5000$ choosing a number of burn-in samples  equal to $1000$. 
The initial $\mathbf{q}^{(1)}$ is drawn from the target $\pi(\mathbf{q})$.  For LF and for BlCaSa we choose  path length $T^*=5$ and the time-steps $N=320,360,\ldots,960$, with a corresponding step sizes $h =5/320,5/360,\ldots,5/960$ ($h$ then varies between $\approx 1.6\times 10^{-2}$ and $ 5.2\times 10^{-3}$).  For our method, where the step size $h_b$ depends from the parameter $b$, we performed two different experiments. 
\noindent The first, called nSP$2$S-$1$ aims at  making a fair comparison with  LF and for BlCaSa: we choose different values of parameter b in the range $( 0.1909831513, 0.1909842368)$ which make the corresponding step sizes $h_b$,  time-steps $N$ and path length $T^*=5$ equal to those used by competitors.
In the second experiment, called nSP$2$S-$2$, we show the performance of the proposed integrator for  values of the  parameter $b$ in the range $(0.1968,0.2008)$ that provide large step sizes $h_b$ varying between $1.05$ and $1.35$.  In this case the time-interval has been randomized with $\pm 40\%$ variations around  $5$, with this ensuring  that $3<T^*<7$.
For all the experiment  we have measured  the acceptance rate, the mean of  $4000$ samples of the energy errors $\Delta H^{(i)}(\mathbf{q}^{(i)},\mathbf{p}^{(i)})=H(\mathbf{q}^*,\mathbf{p}^*)- H(\mathbf{q}^{(i)},\mathbf{p}^{(i)})$ and the effective sample size ESS of the first component $q_1^{(i)}$ of $\mathbf{q}^{(i)}$  which corresponds to the component with largest standard deviation $\alpha_1 = 1$ (see  \cite{MARTINO2017386}).

\noindent In  Figure \ref{multivariate_conf_acc}  on the left vertical axis,   the means of the energy errors (gray square), are reported on a  logarithmic scale, for the different values of the stepsize $h$.
On the right vertical axis, for the LF  (a), for BlCaSa(b) and for the first experiment with nSP$2$S method (c),  the accepted percentage rate (green square) and the ESS  (orange circle)   are plotted in linear scale. In (d) the same quantities versus  step size $h_b$ are depicted for the second experiment performed with the nSP$2$S method. As expected in both cases (c) and (d), our approach has a $100\%$ acceptance rate with mean of energy errors of order $10^{-16}$. In particular the ESS remains above $60\%$ in both cases. It’s worth noting that the computational cost of the experiment  nSP$2$S-$2$ is much lower than the experiment nSP$2$S-$1$ as a larger step size has been adopted without decrement in terms of performance. 

\noindent In addition, for a reduced number of samples  $L=1000$, we performed  a qualitative comparative analysis among the methods  by  estimating the mean and the standard deviation of the samples.
\begin{figure}[H]\label{mean_sd}
  \centering\includegraphics[width=0.45\linewidth]{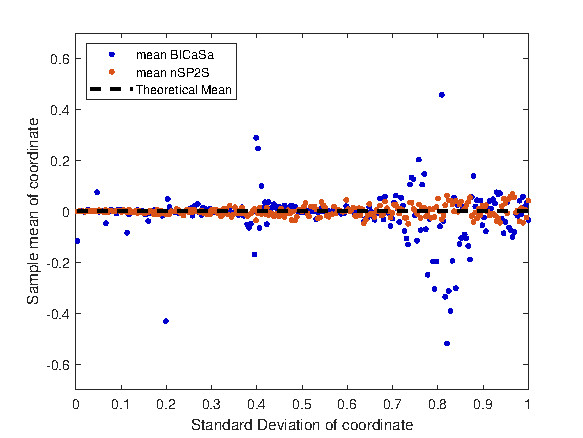}
   \centering\includegraphics[width=0.45\linewidth]{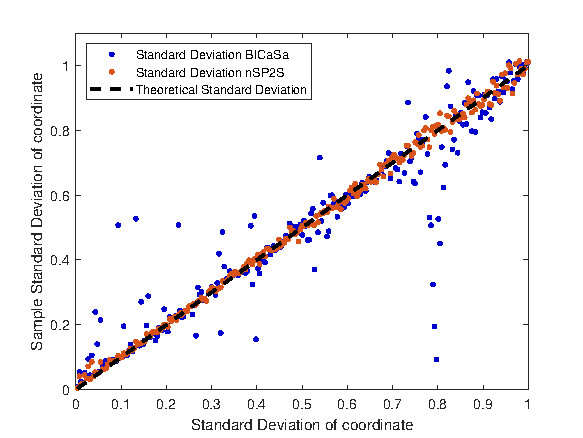}
 \caption{Multivariate distribution. Estimates of means (left) and standard deviations (right) for the $d=256$ dimensional example. On the $x$-axes are reported the standard deviations $\alpha_j$ of each variable $q_j$ for $j=\, 1,\dots, d$, on the $y$-axes the estimated values each evaluated from $L=1000$ iterations, are reported.}
\end{figure}
We run BlCaSa by setting $h=0.014$ and $N=5/h$ and nSP$2$S with $b=0.191$, $h_b=0.0580$ and $N=25$. 
The estimates for the mean and the standard deviations, evaluated as simple means and standard deviations of the values from the $L\, =1000$ iterations for  each of the $d\, =256$ variables,  against the theoretical values of standard deviations  $\alpha_j$, for $j=1,\dots,\, d$, are shown in Figure \ref{mean_sd}. Notice that the error in the estimates for the means and standard deviations obtained with the HMC algorithm with trajectories evaluated with  nSP$2$S appears  even better than the ones provided by HCM with BlCaSa method. Of course, the saving in computational cost was very evident as, for each iteration, only $N=25$ steps of nSP$2$S method are necessary and all proposals accepted, despite the $N=357$ steps used by the BlCaSa method with $128$ proposals rejected. Finally, in Figure \ref{multigauss_conf_ess} we compare  the integrators in terms of ESS per unit computational work i.e. ESS$\times h$  for the different values of the step size $h$. As can be seen,  BlCaSa results  very efficient also with value of $h$ greater then $8\times 10^{-3}$; however nSP$2$S turns out to be the best method because its maximum value is the highest among the maximum values reached  by the other two methods.

\begin{figure}[h]\label{multigauss_conf_ess}
 \centering
\includegraphics[width=0.45\linewidth]{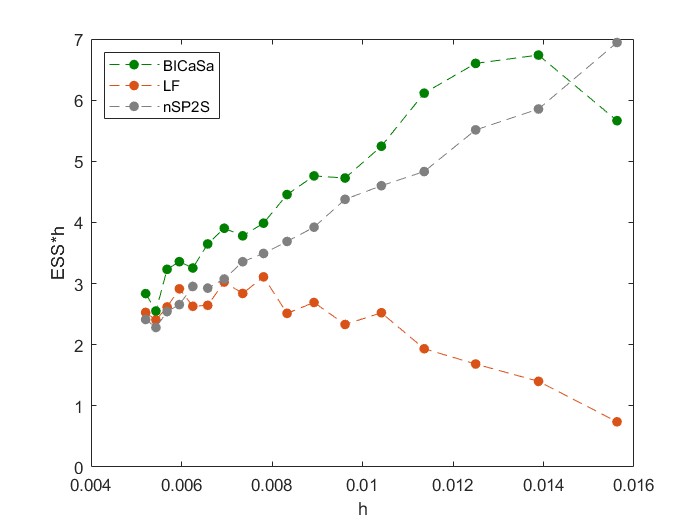}
 \caption{Comparison of the integrators computational cost ESS per time-step/per unit for sampling from Multivariate Gaussian target with $d=256$ by varyng step size $h$. LF (orange points), BlCaSa (green points) and nSP2S (grey points).}
\end{figure}

\subsection{Perturbed Gaussian models}\label{sec:6.3}
In this class we consider models where,  after a suitable change of variables if needed, the potential energy function can be expressed as 
\begin{equation}\label{eq:Uperturbed}
 U(\mathbf{Q})\, =\,  \dfrac{1}{2}\mathbf{Q}^T\,  \, \mathbf{Q} +\epsilon\,  f(\mathbf{Q}), \quad 0 < \epsilon \leq 1.
    \end{equation}
We associate a kinetic energy function $K(\mathbf{P})\,=\, \dfrac{1}{2} \,  \mathbf{P}^T \,  \mathbf{P}$ so that 
the Hamiltonian can be written as 
\begin{equation}\label{eq:Hperturbed}
  K(\mathbf{P}) \,+\,U(\mathbf{Q})\,=\, \dfrac{1}{2}  \mathbf{P}^T \,  \mathbf{P}\,+\, \dfrac{1}{2} \, \mathbf{Q}^T   \mathbf{Q} \,+\,  \epsilon\,  f(\mathbf{Q} )
\end{equation}
and the Hamiltonian system is given by 
\begin{equation}\label{eq:Systemperturbed}
\dfrac{d\mathbf{Q}}{dt}\,=\, \mathbf{P},\qquad 
 \dfrac{d\mathbf{P}}{dt}\,=\, -  \mathbf{Q} \displaystyle - \epsilon \, \nabla_{\mathbf{Q}}\,f(\mathbf{Q}) \end{equation}

\noindent We  apply  the method (\ref{composition_2old}) within the \cref{alg:buildtreenovel} which  provides an energy preserving method for the above Hamiltonian system when $\epsilon=0$. We  investigate the performance of the algorithm with respect to the acceptance rates and to the error in energy at $\epsilon=1$ for  two different specific models: the logistic regression and the Log-Gaussian Cox model.

\subsection{Log-Gaussian Cox model}
As first example we considered, as target, the Log-Gaussian Cox distribution \cite{moller1998log}. For this model the data set is organized in the vector  
$\mathbf{X}\,=\left[X_{1,1},\dots,X_{1,d},X_{2,1},\dots, X_{2,d},\dots X_{d,1}, \dots X_{d,d}\right]^T$,
representing the number of points $X_{i,j}$ in each cell $(i,j)$ of a $d \times d $ dimensional grid in $[0,1]\times [0,1]$. 
The purpose is to sample the variable  $\mathbf{Y}=\left[Y_{1,1},\dots,Y_{1,d},Y_{2,1},\dots, Y_{2,d},\dots Y_{d,1}, \dots Y_{d,d}\right]^T $ from the probability distribution 
given by  $$\mathcal{P}(\mathbf{Y}) = \prod_{i,j=1}^d \exp(X_{i,j} Y_{i,j} - m \exp(Y_{i,j})\,)\, \exp \left(-\dfrac{1}{2}(\mathbf{Y}- \mu \mathbf{1})^T\, S^{-1} (\mathbf{Y}- \mu \mathbf{1}) \right)$$ 
where $m=1/d^2$ represents the area of each cell and the matrix $S$ is given 
\small{
$$
S= \left(  \begin{array}{cccccc}
  T_1 & T_2 & T_3 & \dots  & \dots & T_d\\
     T_2 & T_1 & T_2 & \ldots &  & \vdots\\
      T_3 & T_2 & \ldots & \ldots &  \ldots& \ldots\\
       \vdots & \ldots & \ldots & \ldots & T_2 & T_3\\
       \vdots  &  &  \ldots&  T_2& T_1 & T_2 \\
         T_d & \dots & \dots & T_3 & T_2 & T_1
\end{array}\right),\,\,\, T_i(k,j)= \sigma^2\, e^{-\frac{\sqrt{(1-i)^2+(k-j)^2}}{\beta \, d }},  \,\,\, i,\,k,\,j=1,\dots d. 
$$}
\normalsize
\noindent with $\sigma^2$, $\beta$,   are fixed  parameters and $\mu=\log\left(\displaystyle \sum_{i,j=1}^d X_{i,j}\right) -\sigma^2/2$. The  potential energy function is  defined as $$
\small
\begin{array}{ccc}
   U(\mathbf{Y})\, =\, -\log\left[\mathcal{P}(\mathbf{Y})\right]\, & = & \displaystyle \sum_{i,j=1}^d  m \exp(Y_{i,j}) - X_{i,j} Y_{i,j}\, +\, \dfrac{1}{2}(\mathbf{Y}- \mu \mathbf{1})^T\, S^{-1} (\mathbf{Y}- \mu \mathbf{1}) \\
     & = & \dfrac{1}{2}(\mathbf{Y}- \mu \mathbf{1})^T\, S^{-1} (\mathbf{Y}- \mu \mathbf{1}) \, +\, \displaystyle m \,  \mathbf{1}^T \exp(\mathbf{Y}) -  \mathbf{X}^T \mathbf{Y}.
\end{array}
$$
For the application of the proposed procedure, we take $L$ as the  factor of the 
Cholesky factorization of $S^{-1}$ so that $S^{-1}\,=\,  L^T\, L$. In terms of the novel variable   $\mathbf{Q}:=  L \, (\mathbf{Y}- \mu \mathbf{1})$, the potential energy function can be expressed as in (\ref{eq:Uperturbed}) with ($\epsilon=1$)  
 $$f(\mathbf{Q}):= \displaystyle m \,  \mathbf{1}^T \exp(L^{-1}\mathbf{Q}+ \mu \mathbf{1}) -  \mathbf{X}^T \left(L^{-1}\mathbf{Q}+ \mu \mathbf{1}\right).$$
When we associate a kinetic energy function $K(\mathbf{P})\,=\, \dfrac{1}{2} \, \mathbf{P}^T \,  \mathbf{P}$,
the Hamiltonian can be written as in (\ref{eq:Hperturbed}) and 
 the Hamiltonian system is given by (\ref{eq:Systemperturbed}) 
with $$\nabla_{\mathbf{Q}}\,f(\mathbf{Q}) \,=\, m \,   {L^{-T}}\exp({L}^{-1}\mathbf{Q}+ \mu \mathbf{1}) -  {L^{-T}}\, \mathbf{X}.$$ The sampling values of the original variable are obtained by  exploiting the relation  $\mathbf{Y}={L}^{-1}\mathbf{Q}+ \mu \mathbf{1}$.

\noindent The Log-Gaussian Cox model is particularly relevant as point process to model  presence-only species distribution  \cite{renner2015},  as the case of Scots pines in the Eastern Finland \cite{moller1998log,Christensen2003} or the  spread of the invasive species Eucalyptus sparsifolia, in Australia \cite{renner2015}. In our study, we approach a similar problem of alien plants as the highly competitive woody invasive species \textit{Ailanthus altissima (Mill.) Swingle}, thriving in Murgia Alta Natura 2000 protected area and National Park (southern Italy). \textit{Ailanthus altissima}, also known as tree-of-heaven, is an invasive deciduous plant of Asian origin recognized as one of the most widespread and harmful invasive plants in both  USA \cite{nagendra2013} and Europe \url{(www.europe-aliens.org)} which are causing impoverishment in natural habitats as one of the most important causes of local and regional biodiversity loss, ecosystem degradation, diminishing both abundance and survival of native species \cite{casella2016}.  
\noindent In our tests, we considered as data set the mapping  at very high spatial resolution ($2$ m) of the \textit{Ailanthus altissima} presence obtained by considering multi-temporal remote sensing satellite data and machine learning techniques, based on a two-stage hybrid classification process  \cite{tarantino2019}.
\begin{figure}[h]\label{Ailanto}
  \centering\includegraphics[width=0.6\linewidth]{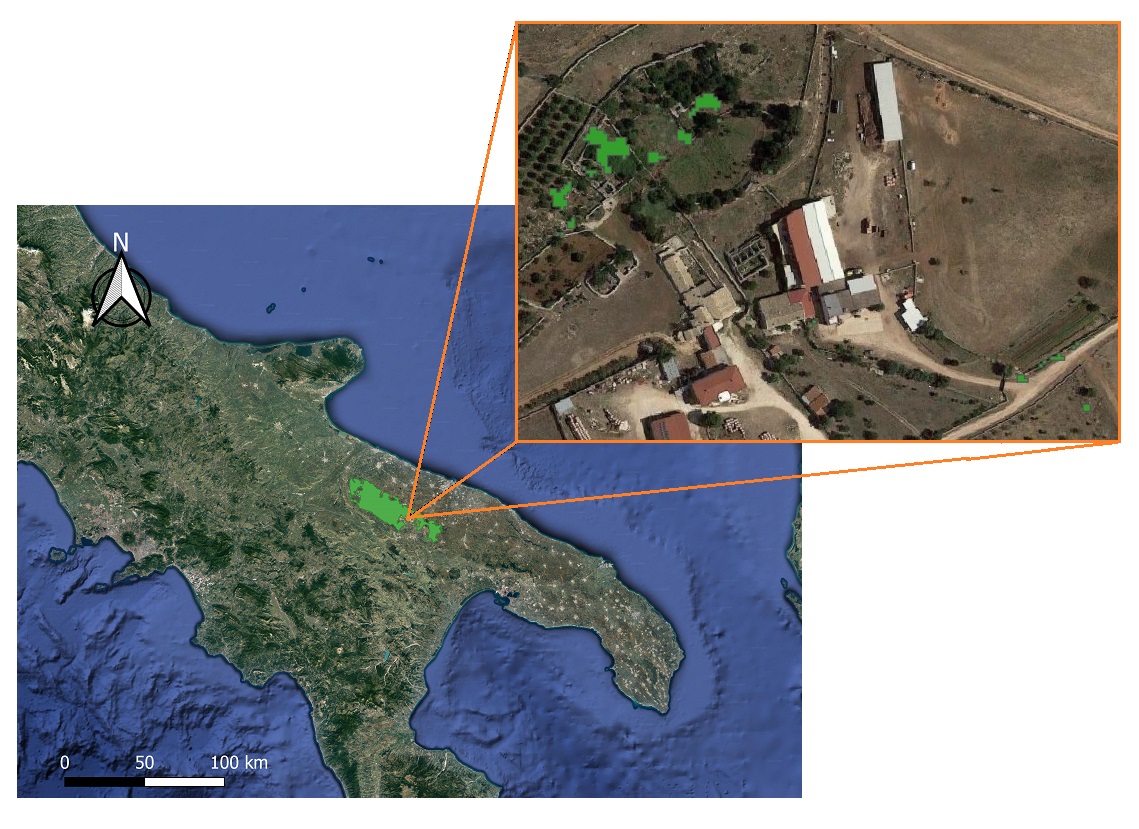}

 \caption{Detail with zoom of the area taken into consideration in Alta Murgia Natura 2000 protected area and National Park (southern Italy)}
\end{figure}
The images considered for the dataset were provided by the European Space
Agency (ESA) under the Data Warehouse 2011-2014 policy within the FP7-SPACE BIO\_SOS project \url{www.biosos.eu} and European LIFE project (LIFE12 218 BIO/IT/000213). In particular, from the entire dataset we extracted a small area containing $185$ trees as shown in Figure \ref{Ailanto}.  After scaling the data between $0$ and $1$ we used a grid size $d=64$ and we calculated the parameters of the Log Gaussian Cox model by using the  methodology of \emph{Moment-based estimation} described in \cite{moraga2013}. The resulting  parameters are $\beta=0.127$, $\sigma^2=3.5881$ and $\mu=\log (185)-\sigma^2/2$. We set the path length $T=3$ and the step size $h$ and $h_b$ in the interval $[0.3,\,0.05]$ with corresponding $N$ ranging from $60$ to $10$. We collect L=5000 Markov chain after 1000 burn-in samples.  In Figure \ref{ExpLogCox}, we have reported the results in terms of acceptance percentage and mean of the energy errors. The horizontal axis of each plot indicates the step size $h$, and $h_b$, for the competitors methods and for our method,  respectively. We clearly see how our method outperforms the competitors, reaching acceptance rates greater than $90\%$ with all $h_b$ values and with an average energy error which always remains very low.
Finally in the Figure \ref{changevar_graymap}, we show the results obtained through the application of the \cref{alg:buildtreenovel}. Starting from $b_{init} =b_{BCS}=0.2113$ (case $2$ of Section \ref{b_selection}) and a reduction factor of   $red\sim 10^{-4}$, after a few steps the method allows to achieve the best configuration for $b$ and $h_b$ yielding the maximum acceptance rate. In the right part of the figure we show the estimate of the intensity map of the Ailanthus trees obtained with the values of $b$ and $h_b$ previously estimated.
\begin{figure}[h]\label{ExpLogCox}
\centering
\subfigure[LF]{\includegraphics[width=0.45\linewidth]{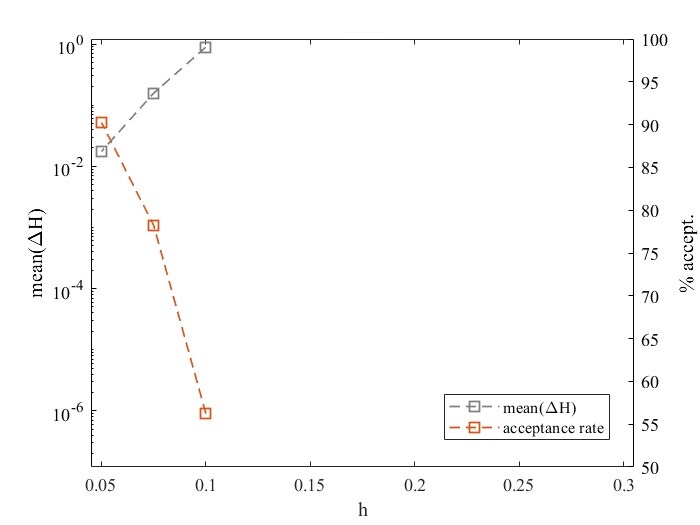}}
\subfigure[BlCaSa]{\includegraphics[width=0.45\linewidth]{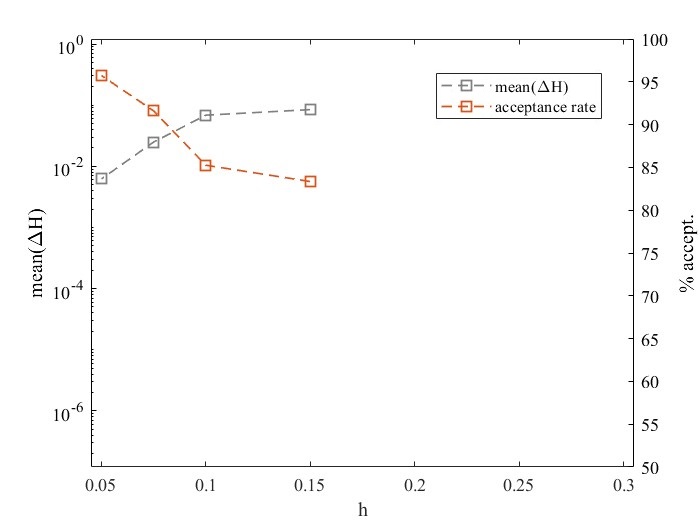}}\\
\subfigure[nSP2S]{\includegraphics[width=0.45\linewidth]{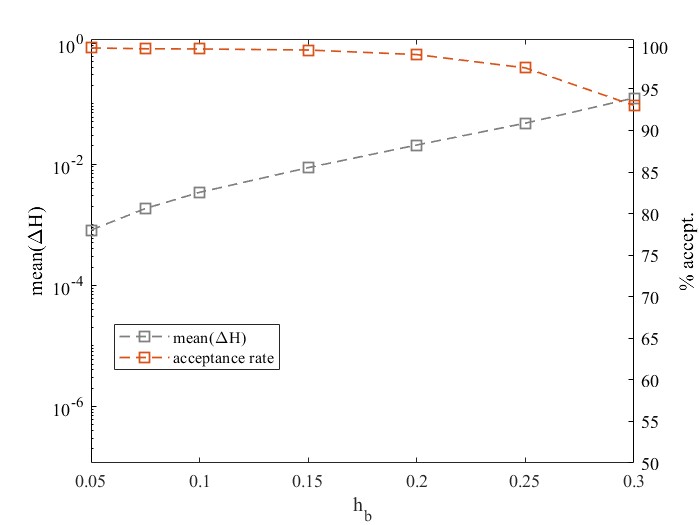}}
\caption{Log-Gaussian Cox target. Acceptance rate (orange sqares), mean of $\Delta H$ (grey sqares) against different step size $h$ for LF (a), BlCaSa (b) and $h_b$ for nSP2S (c).  
}
\end{figure}

\begin{figure}[H]\label{changevar_graymap}
\centering
 \centering\includegraphics[width=0.45\linewidth]{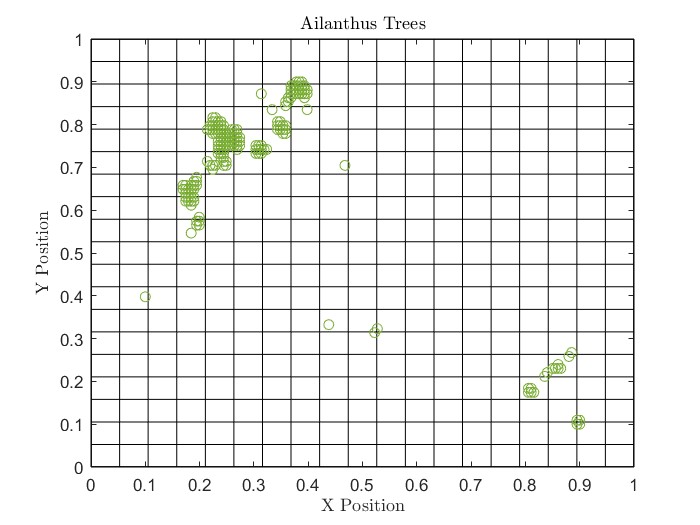}
\subfigure{\includegraphics[width=0.45\linewidth]{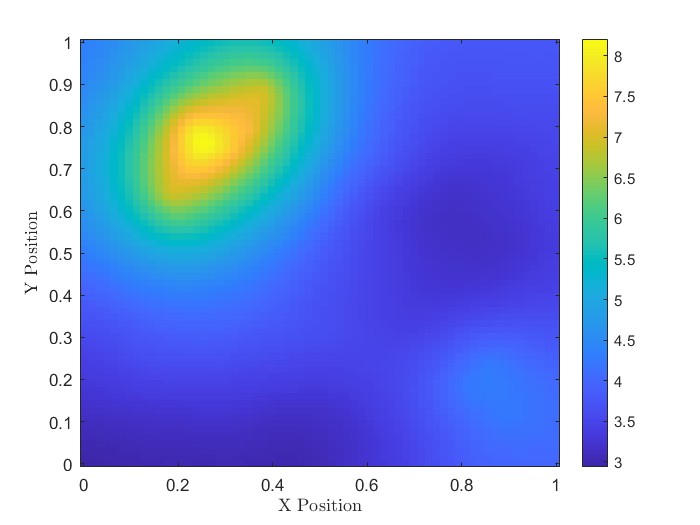}}
\subfigure{\includegraphics[width=0.45\linewidth]{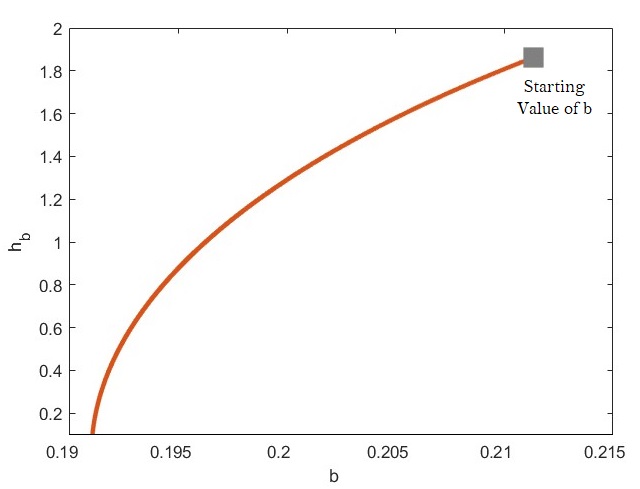}}
\caption{Presence of Ailanthus trees (up, on the left), position of the trees (green points) suitably scaled between $0$ and $1$. Estimated intensity map of Ailhantus obtained with the final choice of $b$ and $h_b$ accordingly to the \cref{alg:buildtreenovel} (up, on the right).  Adaptive choice of $b$ against $h_b$ (down)
}
\end{figure}

\subsection{Logistic regression model}
As second example of  perturbed Gaussian model  distributions we have considered a Bayesian Logistic regression model. By adopting the same notations  in  \cite{thomas2021learning}, we indicate with  $\mathbf{Y}=[Y_1,\ldots Y_n]^T $ 
the $n$-dimensional vector of  
the labels associated to the instances matrix 
$X \in \mathbb{R}^{n\times(d+1)}.$ 
 Let
$\mathbf{x}_k=[X_{k,0},\ldots,X_{k,d}]^T$ the $(d+1)$-dimensional  (column) vector corresponding to the $k$th row of the matrix $X$, for $k=1,\ldots,n.$ The regression coefficients for the $d$  covariates and the intercept are collected in the vector  $\bm \beta=[\beta_0,\beta_1,\ldots,\beta_d]^T$. We specify a multivariate normal prior for $\bm \beta$ with covariance matrix $D=\sigma^2 I$, where $I$ is the $(d+1)$-dimensional identity matrix and $\sigma^2$  is the variance, freely chosen. The purpose is to sample the parameters $\bm \beta$ that follow the distribution:
$$
\small \begin{array}{rcl}
\mathcal{P}(\bm \beta)
&\propto& \exp\Big(\bm \beta^{T}X^{T}\left(\mathbf{Y - \mathbf{1}_n}\right)-\displaystyle \sum_{j=1}^{n}\Big[\log (1+\exp(-\mathbf x_j^T \bm \beta)\Big]\Big)\,\exp\Big(-\frac{1}{2}\bm \beta^T D^{-1}\bm \beta\Big)\\\\
&=& \exp\Big(\bm \beta^{T}X^{T}\mathbf{Y}-\displaystyle \sum_{j=1}^{n}\Big[\log (1+\exp(\mathbf x_j^T \bm \beta)\Big]\Big)\,\exp\Big(-\frac{1}{2}\bm \beta^T D^{-1}\bm \beta\Big)
\end{array}
$$
The potential energy function, in term of the variable $\mathbf{Q}= \bm \beta/ \sigma$ is  defined as:
$$
U(\mathbf{Q})=-\log[\mathcal{P}(\mathbf{Q})]\,=\,\frac{1}{2}\,\mathbf{Q}^T \mathbf{Q}+\sum_{k=1}^{n}\Big[\log (1+ \exp(\sigma \, \mathbf x_k^T \mathbf{Q}))\Big]-\sigma\, \mathbf{Q}^{T}{X}^{T}\mathbf{Y}
$$
Without loosing of generality, we can set $\sigma=1$ as the same results are obtained on the scaled dataset $\tilde X= X/\sigma$. Consequently, it can be expressed as in (\ref{eq:Uperturbed}) ($\epsilon=1$)  with
 $$f(\mathbf{Q}):= \displaystyle \sum_{k=1}^{n}\Big[\log (1+\exp(  \mathbf x_k^T \mathbf{Q}))\Big]- \,  \mathbf{Q}^{T}{X}^{T}\mathbf{Y}.$$
We associate a kinetic energy function $K(\mathbf{P})\,=\, \dfrac{1}{2} \,  \mathbf{P}^T \, \mathbf{P}$ so that 
the Hamiltonian can be written as in (\ref{eq:Hperturbed}) and 
 the Hamiltonian system is given by (\ref{eq:Systemperturbed})
 with 
{\small
$$
\nabla_{\mathbf{Q}}\, f(\mathbf{Q})= -  \, X^{T}\bigg(\mathbf{Y}-\bigg[\frac{\exp(  \,\mathbf x_1^T \mathbf Q)}{1+\exp(  \mathbf x_1^T \mathbf Q)}, \frac{\exp(  \mathbf x_2^T \mathbf Q)}{1+\exp( \mathbf x_2^T \mathbf Q)},\dots, \frac{\exp(\mathbf x_n^T \mathbf Q)}{1+\exp( \mathbf x_n^T \mathbf Q)}\bigg]^T \bigg).
$$} As $\sigma=1$, the sampling values of the original variable are given by $\bm \beta = \sigma \, \mathbf{Q} = \mathbf{Q} $. 
\noindent For this experiment we used the benchmark classification dataset from the UCI repository \cite{bache2013uci}, that consists in different matrices of instances and labels. Here we show  the results  obtained with the Pima Indian dataset, although we also tested the other datasets, Ripley, Heart,
German credit, and Australian credit, obtaining results in line with those shown here.

As is commonly used, we normalize the dataset with $0$ mean and standard deviation $1$ after a scaling procedure of data according to the choosing value of $\sigma$. 
    \begin{figure}[h]\label{logistic_expe}
\centering
  \subfigure[]{\includegraphics[width=0.45\linewidth]{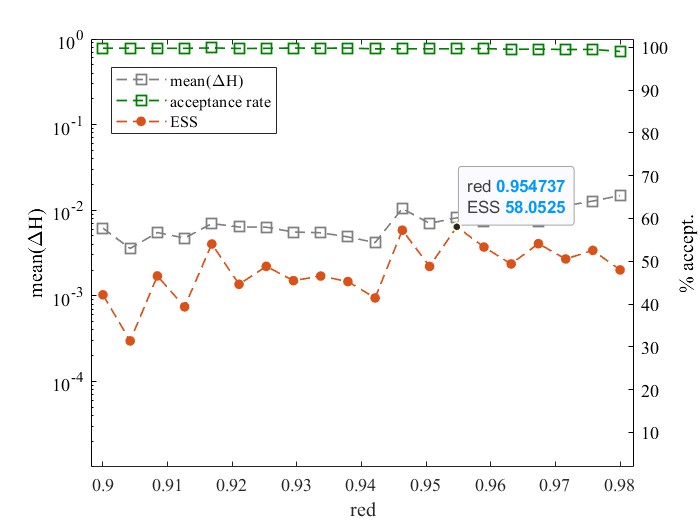}}
  \subfigure[]{\includegraphics[width=0.45\linewidth]{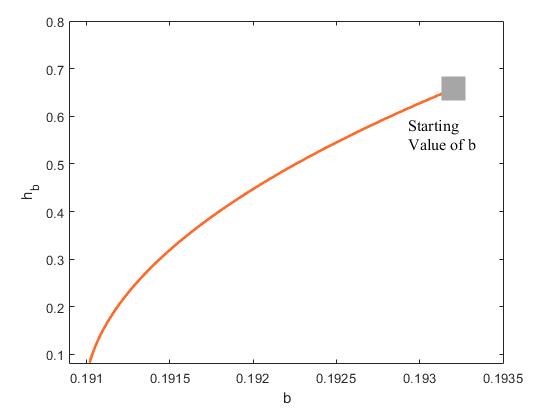}}
 \caption{Logistic Regression target; acceptance rate percentage (green squares), mean of $\Delta H$ (grey squares) and ESS percentage (orange points) measuring with different values of $red$ in the Algorithm \ref{alg:buildtreenovel} (a). Adaptive choice of b against $h_b$ (b).}
\end{figure}
We test the performance of the proposed integrator built on the proposed adaptive approach \ref{alg:buildtreenovel} by setting, the path length $T = 3$, $b_{max}=0.1932$ (case $4$ of Section \ref{b_selection}). In all the experiment we collect $L=5000$ Markov chain after 1000 burn-in samples and for each iteration we randomize the time-step $N=T/h_{b}$ by allowing $\pm 10\% $ to avoid a low number of ESS \cite{neal2011mcmc}. 
\begin{figure}[H]\label{logistic_expe_hist}
\centering
 \includegraphics[width=0.8\linewidth]{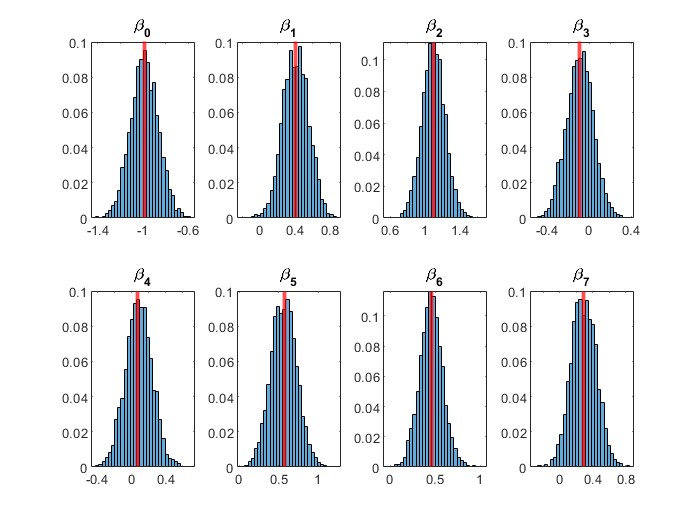}
 \caption{Logistic Regression with Pima Indian dataset. Analisys of posterior estimates HMC-nSP2S (histograms) and frequentist estimates (vertical lines) with glm}
\end{figure}
For this experiment we perform the tests by varying the reduction parameter $red$ within the range $(0.9,0.98)$ and measuring for each of these values the efficiency in term of acceptance percentage, mean of energy errors end ESS calculated as the mean of the ESS on each regression coefficients $\mathbf{q}^{(i)}=[\beta_0^{(i)},\beta_1^{(i)},\ldots\beta_d^{(i)}]$ with $i=1000,\ldots,5000$. The results obtained are shown in Figure \ref{logistic_expe} (a). For each values of $red$ the adaptive algorithm reaches an high percentage of AR. In particular $red=0.954737$ represents the best values that ensures a high acceptance rate along with a high percentage of ESS. Given this reduction value, the Figure \ref{logistic_expe} (b) shows how $b$ changes with respect to $h_b$ reaching, automatically, the best configuration for the sampler. Moreover the estimated samples $\bm \beta$ are deemed to be in accordance with frequency estimates calculated with the generalized linear model (glm) \cite{Nelder1972}. This is clearly shown in Figure \ref{logistic_expe_hist} where the frequentist estimates (red line) falls in the central location of the histograms of $\beta_0,\beta_1,\ldots,\beta_7$.
\section{Conclusions}\label{sec:conclusions}The very recent research literature on searching for efficient volume-preserving and reversible   integrators able to  replace the St\"ormer Verlet  in the practical implementation of the HMC method, uses as  yardstick the ability of the numerical algorithms  to reduce  the expectation of the energy error variable when applied to univariate and multivariariate Gaussian distributions \cite{calvo2019hmc},\cite{blanes2014numerical}. However, none of the above studies makes an explicit reference to the possibility of properly selecting the parameters in order to exactly  preserve the Hamiltonian in order to have zero as expectation value of the energy error.

\noindent In this paper, we analyzed a representation of  linear maps corresponding to trajectories evaluated by mean of symplectic reversible splitting schemes for sampling from Gaussian distributions. In our formulation  (different from the one proposed in \cite{blanes2014numerical}), the expression of the expectation value of the energy error gives evidence of the role of the quantity which is  solely responsible for the distortion in calculated energy. Minimizing this quantity results in reducing the number of rejections in practical implementation of the HMC algorithm; consequently, methods with this quantity equal to zero  are optimal for Gaussian univariate and multivariate distributions in terms of  accepted proposals.

\noindent Within the one-parameter family of second order  splitting methods  considered in this paper, we show that a properly selection of the step size  allows to retain the energy for univariate and multivariate Gaussian distributions.  As all proposals are accepted by construction, the proposed method outperforms the numerical competitors given in \cite{calvo2019hmc,blanes2021symmetrically}  which are neither optimal for Gaussian distributions (as they are not energy-preserving methods) nor less expensive than the ones considered here.

\noindent For more general distributions, which can be interpreted as perturbed  Gaussian distributions,  the same  criterion for the selection of the step size is proposed. The resulting nice  performances are  also enhanced by the application of an adaptive selection of the parameter $b$ which detects one method in the family of the one-parameter splitting integrators.  
Specifically, starting from a suitable large initial value for $b$, we reduce its value  of a given percentage each time a sample is not accepted. 

\noindent In order to validate the effectiveness of the new approach we tested the algorithm also for general classes of target distribution 
as the the Log-Gaussian Cox model and the Bayesian logistic regression, in particular the first one 
is relevant as point process to model  presence-only species distribution  of invasive species \cite{renner2015}. In our test, we considered as data set the mapping  at very high spatial resolution of the \textit{Ailanthus altissima} presence in Alta Murgia National Park \cite{tarantino2019}. 

\noindent Our promising preliminar results  require to exploit the use of more robust criteria for an  adaptive method's selection in one-parameter families of integrators having the objective of reducing the number of rejections. This can be the subject of a future research direction.   

\section*{Acknowledgments} Cristiano Tamborrino  has been supported by LifeWatch Italy through
the project LifeWatchPLUS (CIR-01-00028).
\printbibliography 

\end{document}